\documentclass{article}

\usepackage{microtype}
\usepackage{graphicx}
\usepackage{subcaption}
\usepackage{booktabs} % for professional tables

\usepackage{hyperref}

\usepackage[accepted]{icml2026}

\usepackage{amsmath}
\usepackage{amssymb}
\usepackage{mathtools}
\usepackage{amsthm}

\usepackage[capitalize,noabbrev]{cleveref}

\newcommand{\bbE}{\mathbb{E}}

\newcommand{\bbP}{\mathbb{P}}

\newcommand{\Real}{{\mathbb R}}

\def\[#1\]{\begin{align}#1\end{align}} 

\theoremstyle{plain}
\newtheorem{theorem}{Theorem}[section]
\newtheorem{proposition}[theorem]{Proposition}
\newtheorem{lemma}[theorem]{Lemma}

\theoremstyle{definition}
\newtheorem{definition}[theorem]{Definition}

\theoremstyle{remark}

\usepackage[textsize=tiny]{todonotes}

\icmltitlerunning{Extremely Sparse Edge-Exchangeable Networks}

\begin{document}

\twocolumn[
  \icmltitle{A Generative Model for Extremely Sparse \\
  Edge-Exchangeable Networks}

  % It is OKAY to include author information, even for blind submissions: the
  % style file will automatically remove it for you unless you've provided
  % the [accepted] option to the icml2026 package.

  % List of affiliations: The first argument should be a (short) identifier you
  % will use later to specify author affiliations Academic affiliations
  % should list Department, University, City, Region, Country Industry
  % affiliations should list Company, City, Region, Country

  % You can specify symbols, otherwise they are numbered in order. Ideally, you
  % should not use this facility. Affiliations will be numbered in order of
  % appearance and this is the preferred way.
  \icmlsetsymbol{equal}{*}

  \begin{icmlauthorlist}
    \icmlauthor{Valentin Kilian}{yyy}
  \end{icmlauthorlist}

  \icmlaffiliation{yyy}{Department of Statistics, University of Oxford}

  \icmlcorrespondingauthor{Valentin Kilian}{kilian@stats.ox.ac.uk}

  % You may provide any keywords that you find helpful for describing your
  % paper; these are used to populate the "keywords" metadata in the PDF but
  % will not be shown in the document
  \icmlkeywords{Machine Learning, ICML}

  \vskip 0.3in
]

% this must go after the closing bracket ] following \twocolumn[ ...

% This command actually creates the footnote in the first column listing the
% affiliations and the copyright notice. The command takes one argument, which
% is text to display at the start of the footnote. The \icmlEqualContribution
% command is standard text for equal contribution. Remove it (just {}) if you
% do not need this facility.

% Use ONE of the following lines. DO NOT remove the command.
% If you have no special notice, KEEP empty braces:
\printAffiliationsAndNotice{}  % no special notice (required even if empty)
% Or, if applicable, use the standard equal contribution text:
% \printAffiliationsAndNotice{\icmlEqualContribution}

\begin{abstract}
We propose a graph generative model for sequences of extremely sparse, edge-exchangeable networks. Models for sparse graphs often face a trade-off between desirable properties like exchangeability and the ability to capture the sparsity observed in real-world networks. While models based on vertex or edge exchangeability have successfully generated sparse graphs, achieving the "extremely sparse" regime, where the number of edges scales near-linearly with the number of nodes, has remained a challenge. Recently, a novel Completely Random Measure (CRM) was introduced, demonstrating that this rate could be achieved within the vertex-exchangeable framework of Caron and Fox. This paper extends this work by demonstrating that this new CRM can be integrated into the alternative edge-exchangeable framework to achieve extreme sparsity.
\end{abstract}

% !TEX root = article.tex

\section{Introduction}

The proliferation of large-scale network data across diverse scientific domains, from microbiology and ecology to economics and the social sciences, has revealed a set of unifying structural properties. Among these, \emph{sparsity} stands out as one of the most fundamental. For a simple graph with $N$ nodes, the number of possible edges scales quadratically, specifically as $\binom{N}{2} = \Theta(N^2)$. However, empirical networks rarely approach this theoretical maximum. We formalize this observation by defining a graph sequence as \emph{dense} if its edge count scales quadratically with its node count, and \emph{sparse} if the scaling is sub-quadratic. Real-world networks are overwhelmingly sparse. While they also display other important features like power-law degree distributions, the small-world phenomenon, and community structures, our focus here is squarely on the challenge of modeling sparsity.

Constructing a statistically coherent model for sparse networks is notoriously difficult, as sparsity is often incompatible with desirable modeling assumptions. For instance, nodes exchangeability—the property that a model's distribution is invariant to the relabelling of nodes—is a desirable statistical property for a generative model. However, the Aldous-Hoover theorem \cite{Aldous1981, Hoover1979} presents a major obstacle, demonstrating that any graph model satisfying this property must be either dense or empty (see also \citealp{Orbanz2015}).

To circumvent this limitation, several distinct modeling paradigms have been developed. \emph{Preferential attachment models} \cite{Barabasi1999a, Bollobas2001}, for example, abandon exchangeability entirely to generate sparse graphs.

Other lines of research have focused on weakening the exchangeability assumption. One prominent approach, initiated by Caron and Fox \cite{Caron2017}, substitutes nodes exchangeability with the more flexible notion of \emph{Kallenberg exchangeability} \cite{Kallenberg2005,Veitch2015, Borgs2018}. This framework has spurred a rich body of work modeling various network phenomena, such as overlapping communities \cite{Todeschini2020, Miscouridou2026}, clustering \cite{Caron2023}, dynamic evolution \cite{Naik2022}, and core-periphery structures \cite{Naik2021}. An alternative strategy involves replacing nodes exchangeability with \emph{edge exchangeability} \cite{Crane2015,Crane2016,Broderick2016,Cai2016b}. Models in this class also achieve sparsity. Both of these approaches employ Bayesian nonparametric tools like \emph{Completely Random Measures (CRMs)}. These models have been shown to generate graph sequences where the number of edges, $E$, scales as $N^{1+\epsilon}$ for $0 < \epsilon < 1$.

A recent breakthrough by \citet{Kilian2025} has redefined the achievable level of sparsity within the Caron-Fox framework. They introduced a novel CRM that produces graph sequences where $E = \Theta(N\ell(N))$, with $\ell$ being a slowly varying function. This behavior is termed \emph{extremely sparse}. This paper builds directly on that insight. Our central contribution is to adapt the methodology from \citet{Kilian2025} to achieve this same extreme sparsity within the edge-exchangeable model class.

The remainder of this paper is organized as follows. \cref{sec:edgeexch} provides a review of the edge-exchangeable network model. \cref{sec:sparsity} and \cref{sec:betaprocess} contain our primary results: we first generalize the sparsity theorem from \citet{Cai2016b} to encompass the extremely sparse regime, and then we introduce the Beta process with rapid variation, demonstrating its utility in constructing an extremely sparse, edge-exchangeable model. Finally, \cref{sec:simu} presents simulation studies validating our theoretical findings. All deferred proofs and definitions of asymptotic notation are available in the appendix.

%\\\\\\\\\\\\\\\\\\\\\\\\\\\\\\\\ Edge-exchangeable graph sequences
\section{Edge-exchangeable graph sequences}
\label{sec:edgeexch}

The notion of an \emph{edge-exchangeable graph} appeared around 2015, in the work of Crane and Dempsey \cite{Crane2015,Crane2016} and of Broderick, Cai, and Campbell \cite{Broderick2016,Cai2016b}, with some results presented in workshops prior to publication. In what follows, we are inspired by the presentation in \citet{Cai2016b}. This section provides an introduction to the topic; we refer the reader to the aforementioned references for more details.

\subsection{Permutation Invariance to Edge Arrival Order}

Let $(\mathcal{G}_t)_t$ be a sequence of (multi)graphs, where each graph $\mathcal{G}_t = (\mathcal{V}_t, \mathcal{E}_t)$ consists of a finite set of vertices $\mathcal{V}_t$ and a finite multiset of timestamped edges $\mathcal{E}_t$. We assume the sequence is \emph{projective} (or growing) i.e., $\mathcal{V}_{t} \subseteq \mathcal{V}_{t+1}$ and $\mathcal{E}_{t} \subseteq \mathcal{E}_{t+1}$.

Each (timestamped) edge $(e,s) \in \mathcal{E}_t$ is a tuple whose first entry is a set $e$ of two vertices in $\mathcal{V}_t$ and whose second entry is a timestamp $s$ corresponding to the step at which this edge first appeared. As in the Caron-Fox model, we are only interested in vertices involved in at least one edge. Consequently, we can define $\mathcal{V}_t$ as the set of all vertices that appear in the edges of $\mathcal{E}_t$, in which case the graph $\mathcal{G}_t$ is completely defined by its edge set $\mathcal{E}_t$.

\begin{definition}[\citealp{Cai2016b} Definition 2.4]
	Consider a random graph sequence $(\mathcal{G}_t)_t$ defined as above with $\mathcal{E}_t=\{ (e_1,s_1), \dots, (e_m,s_m)\}$. The sequence $(\mathcal{G}_t)_t$ is (infinitely) \emph{edge-exchangeable} if for every $t \in \mathbb{N}$ and for every permutation $\pi$ of the steps $\{1,\dots,t\}$,
	$\mathcal{G}_t \overset{d}{=} \tilde{\mathcal{G}}_t$, where $\tilde{\mathcal{G}}_t$ has the edge set $\tilde{\mathcal{E}}_t=\{ (e_1,\pi(s_1)), \dots, (e_m,\pi(s_m))\}$.
\end{definition}

In other word, the distribution of the generative model is invariant under any finite permutation of the order in which the edges arrived. This model (and its inherent symmetry) can be connected to partition, feature allocation and trait allocation; these connections are explained in \citet{Broderick2016} and \citet{Campbell2018a}.
	
\subsection{A Bayesian Nonparametric Model}

We now present a graph generative model that exhibits edge-exchangeability, which we will later prove can generate extremely sparse graphs. This model, referred to as the \emph{graph frequency model} in \citet{Cai2016b}, is similar to the Caron-Fox model from \citet{Caron2017} but with several key differences. These differences explain why one model is edge-exchangeable while the other is Kallenberg-exchangeable. A comparison of these two models is provided in \citet{Cai2016b}, where the authors also prove that the two notions are distinct.

In the graph frequency model, we consider a countably infinite set of latent vertices, indexed by the positive integers $\mathbb{N}$. Associated with these vertices is an infinite collection of edge labels $\{\theta_{\{i,j\}}\}$ with values in $[0,1]$ and edge probabilities $\{w_{\{i,j\}}\}$ with values in $[0,1]$. For any given (potentially random) choice of these labels and probabilities, we define the measure $G$ on $[0,1]$ as:
$$G=\sum_{\{i,j\} : i,j\in\mathbb{N}} w_{\{i,j\}}\delta_{\theta_{\{i,j\}}}.$$
If either the labels $\{\theta_{\{i,j\}}\}$ or the probabilities $\{w_{\{i,j\}}\}$ (or both) are random, then $G$ is a \emph{discrete random measure} on $[0,1]$. Given $G$, the graph sequence is constructed recursively. We initialize with $\mathcal{E}_0=\emptyset$. Then, at each step $t$, we form a new edge set, $F^t_{\text{new}}$, by sampling the multiplicity $m^t_{\{i,j\}}$ for every possible edge $\{i,j\}$ according to:
$$m^t_{\{i,j\}} \sim \text{Bernoulli}(w_{\{i,j\}})$$
We then add $m^t_{\{i,j\}}$ copies of the edge $\{i,j\}$ with timestamp $t$ to $F^t_{\text{new}}$. Finally, the edge set is updated as $\mathcal{E}_{t+1}=\mathcal{E}_t\cup F^t_{\text{new}}$. This process generates multigraphs, potentially adding multiple edges at each time step.

\begin{proposition}[\citealp{Cai2016b}]
The sequence of multigraphs generated via the preceding method is edge-exchangeable.
\end{proposition}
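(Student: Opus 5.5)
The plan is to condition on the random measure $G$ and show that, given $G$, the per-step edge sets $F^1_{\text{new}},F^2_{\text{new}},\dots$ are i.i.d. Invariance under relabelling of time steps then follows, and averaging over $G$ preserves it. Concretely, fix $G$, that is, the labels $\{\theta_{\{i,j\}}\}$ and the weights $\{w_{\{i,j\}}\}$. By construction the multiplicities $m^s_{\{i,j\}}$, for $s\ge 1$ and $\{i,j\}$ ranging over unordered pairs, are mutually independent Bernoulli$(w_{\{i,j\}})$ variables. Their law depends on $s$ only through its role as an index, not through its value.

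First I would make the dependence on timestamps explicit. The edge set $\mathcal{E}_t$ is a deterministic function of the array $M_t=(m^s_{\{i,j\}})_{s\le t,\{i,j\}}$: it consists of $m^s_{\{i,j\}}$ copies of $(\{i,j\},s)$ for each $s\le t$ and each pair. For a permutation $\pi$ of $\{1,\dots,t\}$, the relabelled edge set $\tilde{\mathcal{E}}_t$ in the definition is exactly the same function applied to the permuted array $M_t^\pi=(m^{\pi^{-1}(s)}_{\{i,j\}})_{s\le t,\{i,j\}}$. Indeed, an edge with timestamp $s$ in $\tilde{\mathcal{E}}_t$ is one that had timestamp $\pi^{-1}(s)$ in $\mathcal{E}_t$. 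It therefore suffices to show $M_t\overset{d}{=}M_t^\pi$.

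Second, conditionally on $G$, the rows $(m^s_{\{i,j\}})_{\{i,j\}}$ for $s=1,\dots,t$ are i.i.d. random elements of $\{0,1\}^{\{\text{pairs}\}}$, with common product-Bernoulli law determined by $\{w_{\{i,j\}}\}$. An i.i.d. finite sequence is exchangeable, so $M_t\mid G\overset{d}{=}M_t^\pi\mid G$. The only point needing care here is that the index set of pairs is countably infinite. The laws are then defined on the product $\sigma$-algebra, and equality of laws follows from equality of the finite-dimensional distributions. Those are products of Bernoulli probabilities, and they are invariant under permuting the $s$ index. The labels $\theta$ are part of $G$ and are untouched by $\pi$, so the same argument covers the labelled graph if one identifies vertices or edges through $\theta$.

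Third, I would integrate over $G$. For any measurable set $A$, $\bbP(\mathcal{G}_t\in A)=\bbE[\bbP(\mathcal{G}_t\in A\mid G)]=\bbE[\bbP(\tilde{\mathcal{G}}_t\in A\mid G)]=\bbP(\tilde{\mathcal{G}}_t\in A)$, which holds for every $t$ and every $\pi$. This is edge-exchangeability. I expect no real obstacle. The step deserving the most attention is the bookkeeping in the first step: checking that relabelling timestamps on a multiset of edges corresponds exactly to permuting the rows of $M_t$. This includes the case where several edges share a timestamp, which the row formulation handles automatically. A secondary measurability remark is also needed, namely that $\mathcal{G}_t$ is a.s. finite when $\sum w_{\{i,j\}}<\infty$. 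If this fails, both sides are infinite in the same way and the argument is unchanged.
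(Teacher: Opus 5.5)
Your proposal is correct and follows essentially the same route as the paper: conditionally on $G$, the per-step multiplicity arrays are i.i.d., so the finite sequence of steps is exchangeable, and averaging over $G$ preserves this. You also spell out the bookkeeping that the paper's two-line proof leaves implicit, namely that relabelling timestamps amounts to permuting the rows of the multiplicity array, and that equality of laws on the countable product follows from the finite-dimensional distributions.
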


\begin{proof}
Conditional on $G$, the formation of an edge at any step $t$ is an independent event with an identical probability distribution. The exchangeability of the time stamps follows directly.
\end{proof}

This model can be implemented with a random measure $G$ constructed from a Poisson point process. Let $\mathcal{W}$ be a Poisson process on $[0, 1]$ with a non-atomic, $\sigma$-finite rate measure $\nu$ that satisfies $\nu([0, 1]) = \infty$ and $\int_0^1 w \,\nu(\mathrm{d}w) < \infty$\footnote{These two conditions on $\nu$ ensure that $\mathcal{W}$ is a countably infinite collection of weights in $[0,1]$ and that their sum $\sum_{w\in\mathcal{W}} w < \infty$ a.s.}. We can then use $\mathcal{W}$ to define the set of edge probabilities as $w_{\{i,j\}} = w_i w_j$ for $i\neq j$, and $w_{\{i,i\}}=0$ (to prevent self-loops), where each $w_i \in \mathcal{W}$. The edge labels $\theta_{\{i,j\}}$ can be sampled independently and uniformly from $[0,1]$. With this setup, $G$ becomes a homogeneous CRM on $[0,1]$ with no deterministic or fixed atomic components \cite{Kingman1967, Kingman1993, Lijoi2010}. The choice of the CRM $G$, and therefore the choice of the measure $\nu$, strongly influences the properties of the resulting edge-exchangeable graph sequence.

While this model generates multigraphs, it can be readily adapted to produce simple graphs $\overline{\mathcal{G}}_t=(\overline{\mathcal{V}}_t,\overline{\mathcal{E}}_t)$. This is done by setting $\overline{\mathcal{V}}_t=\mathcal{V}_t$ and defining $\overline{\mathcal{E}}_t$ as the set of unique edges in $\mathcal{E}_t$ (i.e., those with a multiplicity of at least one). Although the resulting simple graphs do not inherit the edge-exchangeability property, they retain many characteristics of the original multigraphs, most notably their sparsity. In what follows, we consider only multigraphs. 

\begin{figure}[H]
\centering
        \includegraphics[width=0.45\textwidth]{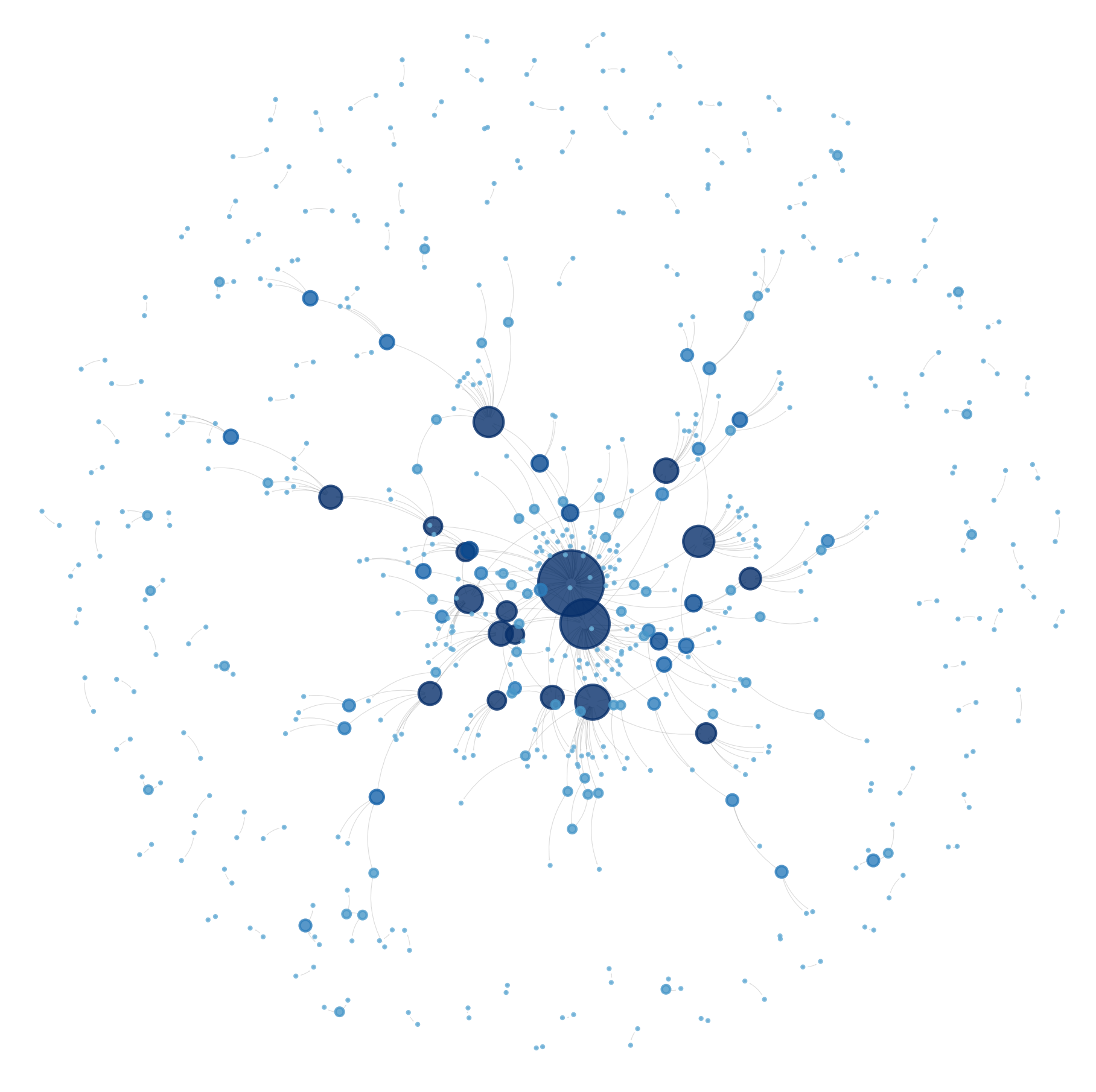}
   \caption{Edge-exchangeable graph generated from a RapidBeta process with $\alpha=1, \tau=0.95, \xi=1.2$, and $\eta=15$.}
\label{fig:graph}
\end{figure}

%///////////Extreme Sparsity
\section{Extreme Sparsity}
\label{sec:sparsity}

We denote the number of vertices and edges in the multigraph, respectively, as:
\begin{align*}
N_t &= |\mathcal{V}_t| = \sum_{i} \mathbf{1}_{\sum_{j\neq i} M^t_{\{i,j\}} > 0},\\
N^{(e)}_t &= |\mathcal{E}_t| = \frac{1}{2}\sum_{i\neq j} M^t_{\{i, j\}}, 
\end{align*}
where $M^t_{\{i, j\}} = \sum_{p=1}^t m^p_{\{i,j\}}$ is the multiplicity of the edge $\{i,j\}$ at time $t$. The following theorem states that an edge-exchangeable model constructed with a measure that has a rapidly varying tail is extremely sparse.

\begin{theorem}
\label{thm:sparsity}
Assume that, as \(w\downarrow0\),
\[
\nu(w)\sim c\,w^{-2}\ell(w^{-1}),
\]
where \(c>0\), \(\ell\) is slowly varying, and
\[
\ell_1(x):=\int_x^\infty u^{-1}\ell(u)\,\mathrm du<\infty
\]
for all sufficiently large \(x\).  If
$
\ell_1(x\ell_1(x))\sim \ell_1(x),
$
then
\[
N_t^{(e)}
=
\Theta\!\left(\frac{N_t}{\ell_1(N_t)}\right)
\qquad\text{a.s.}
\]
In particular, if
$
\ell(x)=(\log x)^a,
$ $a<-1$,
then \[
N_t^{(e)}
=
\Theta\!\left(N_t(\log N_t)^{-a-1}\right)
\qquad\text{a.s.}
\]
\end{theorem}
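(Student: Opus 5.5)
The plan is to prove separate almost-sure asymptotics for the edge count $N^{(e)}_t$ and the vertex count $N_t$, both as functions of $t$, and then eliminate $t$. Throughout I condition on the Poisson process $\mathcal{W}=\{w_i\}$ and write $S=\sum_i w_i<\infty$ and $S_2=\sum_i w_i^2$.

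\emph{Step 1: edges grow linearly in $t$.} Conditionally on $\mathcal{W}$, the per-step edge counts $X_p=\sum_{i<j} m^p_{\{i,j\}}$, $p=1,2,\dots$, are i.i.d. Their mean is $\mu=\sum_{i<j}w_iw_j=(S^2-S_2)/2\in(0,\infty)$, and their variance is at most $\mu$. The strong law of large numbers then gives $N^{(e)}_t/t\to\mu$ almost surely. In particular $N^{(e)}_t=\Theta(t)$ a.s.

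\emph{Step 2: vertices grow like $t\,\ell_1(t)$.}
\begin{itemize}
\item \emph{Per-vertex bounds.} Vertex $i$ is present at time $t$ with conditional probability $p_i(t)=1-\prod_{j\neq i}(1-w_iw_j)^t$. Using $1-x\le e^{-x}$ and a union bound,
\[1-e^{-t w_i (S-w_i)}\le p_i(t)\le \min(1,t w_i S).\]
Only finitely many $i$ have $w_i>S/2$, so up to $O(1)$ vertices we have $p_i(t)\asymp 1-e^{-t w_i S}$.
\item \emph{Conditional mean.} It follows that $\bbE[N_t\mid\mathcal{W}]\asymp \Phi(tS)$, where $\Phi(x):=\sum_i(1-e^{-xw_i})$.
\item \emph{Deterministic part.} By Campbell's formula and the substitution $u=1/w$,
\[\bbE\Phi(x)\sim c\int_1^\infty(1-e^{-x/u})\,\ell(u)\,\mathrm du .\]
Split this integral at $u=x$. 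The range $u\le x$ contributes $O(x\ell(x))$. The range $u>x$ contributes $\asymp x\int_x^\infty u^{-1}\ell(u)\,\mathrm du=x\ell_1(x)$. Since $\ell_1$ is slowly varying with $\ell(x)=o(\ell_1(x))$ (a standard de Haan-type fact for $\ell_1(x)=\int_x^\infty u^{-1}\ell(u)\,\mathrm du$), we get $\bbE\Phi(x)\asymp x\ell_1(x)$.
\item \emph{Concentration of $\Phi$.} $\Phi(x)$ is a Poisson linear functional with $\mathrm{Var}\,\Phi(x)\le\bbE\Phi(x)$. Chebyshev along $x_k=2^k$, Borel--Cantelli, and monotonicity of $\Phi$ in $x$ then give $\Phi(x)\asymp x\ell_1(x)$ a.s. Because $\ell_1$ is slowly varying, $\Phi(tS)\asymp t\ell_1(t)$ a.s.
\end{itemize}

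\emph{Step 3: concentration of $N_t$ given $\mathcal{W}$.} I expect this to be the main obstacle, because the vertex indicators are dependent through shared edges. I would first try Efron--Stein over the independent Bernoulli variables $m^p_{\{i,j\}}$. Flipping one of them changes $N_t$ by at most $2$, and the change is nonzero only when that edge is the unique witness of an endpoint. A crude bound gives
\[\mathrm{Var}(N_t\mid\mathcal{W})\le 4\sum_{p\le t}\sum_{i<j}w_iw_j=O(t).\]
This is negligible against $(t\ell_1(t))^2$; if one wanted $\mathrm{Var}=O(\bbE N_t)$ instead, a finer count of unique-witness edges would be needed, but the crude bound suffices. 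Chebyshev along $t_k=2^k$ and Borel--Cantelli then give $N_{t_k}\asymp t_k\ell_1(t_k)$ a.s. Monotonicity of $N_t$ in $t$, together with the slow variation of $\ell_1$, interpolates this to all $t$: $N_t\asymp t\ell_1(t)$ a.s.

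\emph{Step 4: inversion.} Set $x=t$, so that $N_t\asymp t\ell_1(t)$. By slow variation and the hypothesis $\ell_1(x\ell_1(x))\sim\ell_1(x)$, we have $\ell_1(N_t)\sim\ell_1(t\ell_1(t))\sim\ell_1(t)$, and bounded constant factors do not affect this. Hence
\[t\asymp N_t/\ell_1(t)\asymp N_t/\ell_1(N_t),\]
and combining with Step 1 yields $N^{(e)}_t=\Theta(N_t/\ell_1(N_t))$ a.s.

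\emph{Special case.} For $\ell(x)=(\log x)^a$ with $a<-1$, one computes $\ell_1(x)=(\log x)^{a+1}/(-a-1)$. The hypothesis holds because $\log(x\ell_1(x))\sim\log x$. Substituting gives $N^{(e)}_t=\Theta(N_t(\log N_t)^{-a-1})$.
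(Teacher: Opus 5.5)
Your proof is correct. Its skeleton is the paper's:
\begin{itemize}
\item condition on $\mathcal W$ and show edges grow linearly in $t$;
\item compare $\mathbb{E}[N_t\mid\mathcal W]$ with $\Phi(tS)$, which is the paper's $T_{tS}$;
\item evaluate $\mathbb{E}\Phi$ by Campbell, a split at $u=x$, Karamata and $\ell=o(\ell_1)$;
\item concentrate $\Phi$ on a geometric grid using monotonicity;
\item invert via $\ell_1(t\ell_1(t))\sim\ell_1(t)$.
\end{itemize}
What differs is the probabilistic tools, which are a genuinely different and more elementary route.

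\textbf{Concentration.} The paper uses a Chernoff bound for $N^{(e)}_t$ and a Poisson-domination bound on the moment generating function of $T_\lambda$. For $N_t$ given $\mathcal W$ it applies Freedman's inequality to the edge-revealing Doob martingale. The resulting tail $\exp\{-\Theta(t\ell_1(t)^2)\}$ is summable over every integer $t$. You instead use only the SLLN, $\mathrm{Var}\,\Phi(x)\le\mathbb{E}\Phi(x)$, and Efron--Stein. Your bound $\mathrm{Var}(N_t\mid\mathcal W)\le 4t\sum_{i<j}w_iw_j$ is exactly the variance proxy the paper feeds into Freedman. Chebyshev then gives only $O(1/(t\ell_1(t)^2))$. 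This is not summable over all $t$ in general (take $\ell_1(t)\asymp(\log t)^{a+1}$). So your restriction to $t_k=2^k$, combined with monotonicity of $N_t$, is genuinely needed, and it suffices.

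\textbf{Precision.} The paper's $\varepsilon$-dependent bounds on $R_i=-\sum_{j\ne i}\log(1-w_iw_j)$ give the exact asymptotics $N_t\sim cS\,t\ell_1(t)$, and hence the sharper theorem in the appendix. Your per-vertex bounds, with constants $1/2$ and $e/(e-1)$, give only $N_t\asymp t\ell_1(t)$. That is enough for the $\Theta$ statement. Since $\ell_1$ is nonincreasing and slowly varying, $\ell_1(N_t)\sim\ell_1(t\ell_1(t))$ whenever $N_t$ stays within constant factors of $t\ell_1(t)$.

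The one routine detail to add: Efron--Stein over countably many Bernoulli variables needs either the countable martingale-difference form, or a truncation-and-limit step like the paper's Fatou argument.
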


This result extends Theorem 5.3 in \citet{Cai2016b} to the extremely sparse regime. The main difference is that we consider rapidly varying measures, whereas they consider regularly varying measures that are not rapidly varying. This crucial distinction allows us to achieve a better sparsity rate. The proof methods rely on conditioning on the point process $\mathcal{W}$, after which the approach is similar to that of \cite{Janson2018}.

There exist several families of slowly varying functions that satisfy all the required assumptions in the theorem. The most useful one is the logarithmic family ${(\log x)^a}$ for $a<-1$. However, these also include more log-based families, such as the iterated log family $p{(\log_k (x))}^{-p-1}\prod_{i=0}^{k-1}{(\log_i (x))}^{-1}$ for $p>0$, where $\log_k$ is the logarithm iterated $k$ times, and the logarithmic exponential family $a{(\log x)}^{a-1}\exp(-{(\log x)}^a)$ for $0<a<1/2$. Additionally, there are Lambert $W$-based families, such as the Lambert $W$ family $\frac{pW{(x)}^{-p}}{1+W(x)}$ for $p>0$, where $W$ is the principal branch of the Lambert$W$ function, or more generally the iterated Lambert$W$ family $pW_k{(x)}^{-p}\prod_{i=1}^k\frac{1}{1+W_i (x)}$, where $W_k$ is the iterated principal branch of the Lambert $W$ function.

%////////////Beta process with rapid variation
\section{Beta process with rapid variation}
\label{sec:betaprocess}

We define the \emph{Beta process with rapid variation (RapidBeta)} as the process whose rate measure $\nu$ on $[0,1]$ has the density:
\begin{equation}
\nu(w) = \frac{\eta}{\alpha-\tau} \int_{\tau}^{\alpha} \frac{s}{\Gamma(1-s)}w^{-1-s} \left(1-w\right)^{\xi-1} \,\mathrm{d}s,
\end{equation}
where the parameters satisfy $\eta, \xi > 0$ and $0 \leq \tau < \alpha \leq 1$. For the special case where $\eta = \xi = 1$, this measure recovers the mixture of stable densities from \citet{Kilian2025}, restricted to the range $[0,1]$. \cref{fig:rapidbeta_density} displays the RapidBeta density for $\alpha=1$ and illustrates the effects of the parameters $\eta$, $\tau$, and $\xi$. The effect of $\alpha$ can be understood by adapting Proposition 1 from \citet{Kilian2025}:

\begin{figure}[h]
    \centering
     \includegraphics[width=0.6\textwidth]{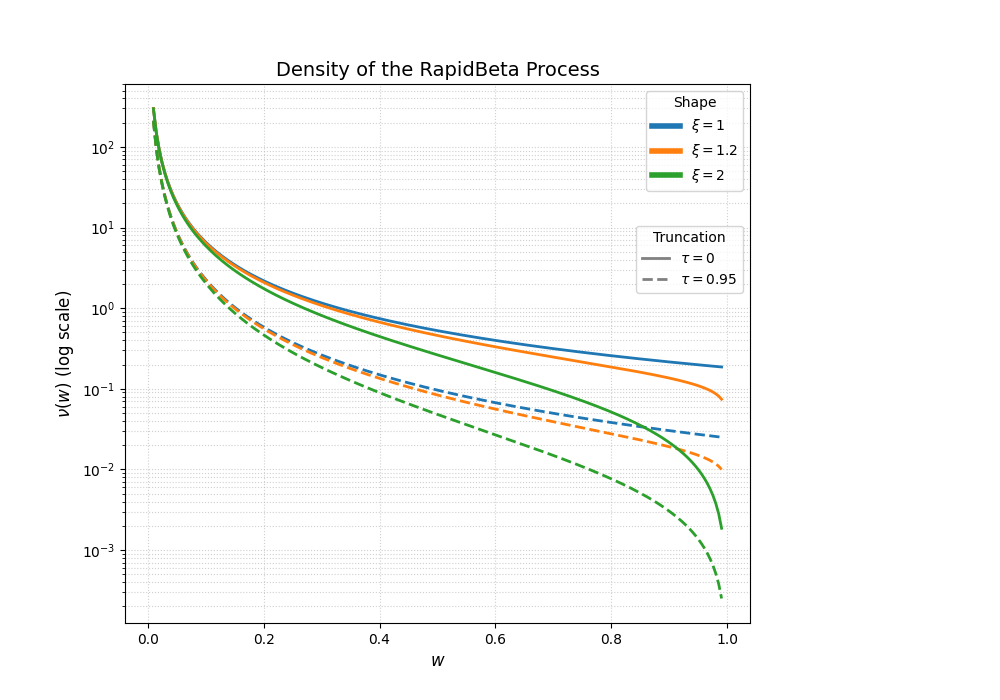}
    \caption{Density of the RapidBeta rate measure $\nu$ evaluated at $\alpha=1$ and $\eta=1$. Each parameter controls a distinct structural property of the process. The intensity $\eta$ acts as a global multiplier, scaling the overall mass. The shape parameter $\xi$ dictates the right-tail behavior. Higher values force the density to decay faster as $w \to 1$. Increasing the truncation parameter $\tau$ eliminates the lower-order terms in the asymptotic behavior of $\nu$ near zero.}
        \label{fig:rapidbeta_density}
\end{figure}

\begin{proposition}
\label{thm:AsymptoticsLevyLaplace}
If $\alpha=1$, the density of the rate measure satisfies: 
$$\nu(w) = \Theta\left(w^{-2}(\ln(1/w))^{-2}\right) \quad \text{as } w \to 0.$$
If $\alpha<1$, the density of the rate measure satisfies: 
$$\nu(w) = \Theta\left(w^{-1-\alpha}(\ln(1/w))^{-1}\right) \quad \text{as } w \to 0.$$
\end{proposition}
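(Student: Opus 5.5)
The proof reduces to the asymptotics of the integral
\[
\nu(w)=\frac{\eta}{\alpha-\tau}(1-w)^{\xi-1}\int_\tau^\alpha \frac{s}{\Gamma(1-s)}\,w^{-1-s}\,\mathrm ds
\]
as $w\to0$. Since $(1-w)^{\xi-1}\to1$, only the integral matters. I will write $L=\ln(1/w)$, so that $w^{-1-s}=w^{-1}e^{sL}$. As $L\to\infty$, the integral $\int_\tau^\alpha g(s)e^{sL}\,\mathrm ds$ with $g(s)=s/\Gamma(1-s)$ is dominated by a neighbourhood of the upper endpoint $s=\alpha$ of width $O(1/L)$. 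This is a Laplace-type endpoint analysis, and the behaviour of $g$ at $\alpha$ decides which case we are in.

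\textbf{Case $\alpha<1$.} Here $g$ is continuous and strictly positive on $[\tau,\alpha]$, with the possible exception $g(0)=0$ when $\tau=0$; this does not matter because it is away from the endpoint $\alpha>0$. Take $s\in[\alpha-\delta,\alpha]$ with $\delta>0$ small enough that $\alpha-\delta>\max(\tau,0)$. On this interval $g$ is bounded above and below by positive constants. Hence
\[
\int_{\alpha-\delta}^{\alpha} e^{sL}\,\mathrm ds=\frac{e^{\alpha L}(1-e^{-\delta L})}{L}\asymp \frac{w^{-\alpha}}{L}.
\]
The rest of the range contributes at most $\sup g\cdot e^{(\alpha-\delta)L}=O(w^{-\alpha+\delta})$, which is negligible. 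Multiplying by $w^{-1}$ gives $\nu(w)=\Theta\bigl(w^{-1-\alpha}(\ln(1/w))^{-1}\bigr)$.

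\textbf{Case $\alpha=1$.} Now $1/\Gamma(1-s)$ vanishes at $s=1$. Using $\Gamma(x)=\Gamma(1+x)/x$, we get $1/\Gamma(1-s)=(1-s)/\Gamma(2-s)$. So $g(s)=(1-s)\,h(s)$ with $h(s)=s/\Gamma(2-s)$, and $h$ is continuous and positive near $1$ (indeed $h(1)=1$). Substituting $u=1-s$ gives
\[
\int_{1-\delta}^{1}(1-s)h(s)e^{sL}\,\mathrm ds\asymp e^{L}\int_0^{\delta}u\,e^{-uL}\,\mathrm du\asymp \frac{e^{L}}{L^2}=\frac{w^{-1}}{L^2},
\]
since $\int_0^\delta u e^{-uL}\,\mathrm du=L^{-2}\bigl(1-O(e^{-\delta L}L)\bigr)$. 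The remaining range is again $O(w^{-1+\delta})$. Multiplying by $w^{-1}$ yields $\nu(w)=\Theta\bigl(w^{-2}(\ln(1/w))^{-2}\bigr)$.

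\textbf{Main obstacle.} The only delicate point is the case $\alpha=1$: one must identify the linear vanishing of $1/\Gamma(1-s)$ at $s=1$, which is what produces the extra logarithmic factor $L^{-2}$ in place of $L^{-1}$. One must also check that the bounds on $h$ and $g$ are uniform on the endpoint interval, so that constants pass cleanly into the $\Theta$ statements. Everything else is the standard split into an endpoint window plus an exponentially smaller remainder.
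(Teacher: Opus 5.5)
Your proof is correct. It uses the same underlying observation as the paper. The $s$-integral is a Laplace transform in $z=\ln(1/w)$, dominated by the endpoint $s=\alpha$. When $\alpha=1$, the linear vanishing $1/\Gamma(1-s)=(1-s)/\Gamma(2-s)$ supplies the extra factor $z^{-1}$.

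The tools differ. The paper substitutes $u=\alpha-s$ and notes that $\frac{\alpha-u}{\Gamma(1+u-\alpha)}$ behaves like $u$ (if $\alpha=1$) or like $\alpha/\Gamma(1-\alpha)$ (if $\alpha<1$) as $u\to0$. It then invokes a Karamata-type Abelian/Tauberian theorem for Laplace transforms. You instead split the integral into an endpoint window of width $\delta$ and an exponentially smaller remainder, and sandwich the window integral by explicit computation.

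Your route is elementary and self-contained, with no regular-variation machinery. The paper's route gives exact equivalence with constants:
\[
\nu(w)\sim\frac{\eta}{1-\tau}\,w^{-2}(\ln(1/w))^{-2}\ \text{ if } \alpha=1,
\qquad
\nu(w)\sim\frac{\eta\alpha}{(\alpha-\tau)\Gamma(1-\alpha)}\,w^{-1-\alpha}(\ln(1/w))^{-1}\ \text{ if } \alpha<1.
\]
This sharper form is what is needed downstream, because Theorem~\ref{thm:sparsity} assumes $\nu(w)\sim c\,w^{-2}\ell(w^{-1})$, not merely a $\Theta$ bound.

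Your method gives the same sharp statement at no extra cost. Your window bounds are the minimum and maximum of $h$ (resp.\ $g$) over $[\alpha-\delta,\alpha]$, and for each fixed $\delta$ the remainder is $o$ of the main term. Letting $\delta\downarrow0$ and using continuity then yields the limits $h(1)=1$ and $g(\alpha)=\alpha/\Gamma(1-\alpha)$, which are exactly the paper's constants.
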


Additionally, one can verify that the two necessary conditions for the Poisson process construction hold: $\nu([0,1]) = \infty$ and $\int_0^1 w\nu(\mathrm{d}w) < \infty$ (see appendix for the proof). It follows that an edge-exchangeable graph sequence constructed using the RapidBeta measure with $\alpha=1$ satisfies the condition of Theorem~\ref{thm:sparsity}. Therefore, this construction yields an \emph{extremely sparse graph sequence}.

While the asymptotic sparsity regime of the RapidBeta construction is entirely determined by $\alpha$, the lower endpoint $\tau$ plays a non-negligible role at finite sample sizes. Indeed, the Lévy density is obtained by integrating contributions of the form $w^{-1-s}$ over $ s\in[\tau,\alpha]$, so that components with larger $s$ (i.e., closer to 1) dominate the small-$w$ behavior responsible for the asymptotic regime. However, when $\tau$ is substantially below 1, the mixture includes terms with smaller exponents, which decay more slowly and can materially influence the effective behaviour over the range of weights accessible in finite simulations. As a consequence, although all choices $\tau<1$ with $\alpha=1$ are asymptotically equivalent and lead to extreme sparsity, selecting $\tau$ close to 1 concentrates the mixture on exponents near the critical boundary and suppresses lower-order contributions, thereby accelerating the emergence of the extreme-sparse regime in practice.

%///////////////////////////Simulation////////////////////////:
\section{Simulation of the RapidBeta Process}
\label{sec:sampling}

To simulate an edge-exchangeable graph sequence as described in \cref{sec:edgeexch}, we must first generate the Poisson point process
 $$
 \mathcal{W} = \{w_i\}
 $$
 
When $ \mathcal{W}$ is a RapidBeta  process, the associated Lévy measure $\nu$ satisfies $\nu([0,1]) = \infty$, implying that $\mathcal{W}$ is almost surely infinite. Consequently, exact simulation is impossible, and we instead consider a truncated version.

\subsection{Truncated RapidBeta Process}

For a fixed $\varepsilon>0$, define the truncated Poisson process
\begin{equation}
 \mathcal{W} _\varepsilon=\{w_i \in\mathcal{W}: w_i > \varepsilon\},
\end{equation}
which contains only finitely many atoms a.s. Let us define the intensity measure $\lambda$ as
\begin{equation}
\lambda(\mathrm{d}s,\mathrm{d}x)
= \frac{\eta}{\alpha-\tau}\frac{s}{\Gamma(1-s)}x^{-1-s}\left(1-x\right)^{\xi-1}\mathrm{d}s\mathrm{d}x,
\end{equation}
Sampling from $\mathcal{W}$ is equivalent to sampling a 2D Poisson point process on $[\tau,\alpha]\times[0,1]$ with intensity $\lambda$, and discarding the first dimension. Sampling from $\mathcal{W}_\varepsilon$ is performed analogously but with the 2D Poisson point process taken on $[\tau,\alpha]\times[\varepsilon,1]$.

\subsection{Partitioned Thinning Scheme}

To construct an efficient exact sampler for $\mathcal{W}_\epsilon$, we partition $[\varepsilon,1]$ into two regions:
\[
A_\varepsilon = [\varepsilon, 1-\delta],
\qquad
B = (1-\delta, 1],
\]
where $\delta \in (0,1)$ is fixed. On $A_\varepsilon$, we use the bound
\begin{equation}
\lambda(\mathrm{d}s,\mathrm{d}x)
\le
\frac{\eta}{\alpha-\tau} B_A x^{-1-\alpha}
\,\mathrm{d}s\,\mathrm{d}x,
\end{equation}
where $B_A = \max\{1, \delta^{\xi-1}\}$. On $B$, we use the bound
\begin{equation}
\lambda(\mathrm{d}s,\mathrm{d}x)
\le
\frac{\eta}{\alpha-\tau} (1-\delta)^{-1-\alpha}
(1-x)^{\xi-1}
\,\mathrm{d}s\,\mathrm{d}x.
\end{equation}

These bounds define valid dominating measures on each region, allowing exact sampling via thinning as described in detail in \cref{alg:rapidbeta}.

\begin{algorithm}[ht]
\caption{Exact sampler for the truncated weight set $\mathcal{W}_\varepsilon = \{w_i > \varepsilon\}$}
\label{alg:rapidbeta}
\begin{algorithmic}[1]

\REQUIRE Parameters $\eta, \xi, \tau, \alpha$, truncation level $\varepsilon$, partition parameter $\delta$

\STATE $B_A \gets \max\{1, \delta^{\xi-1}\}$

\STATE Compute\\
$
\Lambda_A = \eta B_A \int_{\varepsilon}^{1-\delta} x^{-1-\alpha} \,\mathrm{d}x,
$\\
$
\Lambda_B = \eta (1-\delta)^{-1-\alpha} \int_{1-\delta}^{1} (1-x)^{\xi-1} \,\mathrm{d}x
$

\STATE Sample $N_A \sim \mathrm{Poisson}(\Lambda_A)$ and $N_B \sim \mathrm{Poisson}(\Lambda_B)$

\FOR{$k = 1,\dots,N_A$}
    \STATE Sample $S \sim \mathrm{Unif}[\tau,\alpha]$
    \STATE Sample $W$ with density proportional to $w^{-1-\alpha}$ on $(\varepsilon, 1-\delta]$:\\
    $U\sim\mathrm{Unif}[0,1]$\\
    $
    W =
    \left(
    \varepsilon^{-\alpha}
    - U \left(\varepsilon^{-\alpha} - (1-\delta)^{-\alpha}\right)
    \right)^{-1/\alpha}
    $
    \STATE Accept with probability
    $
    \frac{S}{\Gamma(1-S)}
    \frac{W^{\alpha-S}(1-W)^{\xi-1}}{B_A}
    $
    \IF{accepted}
        \STATE Add weight $W$ to $\mathcal{W}_\varepsilon$
    \ENDIF
\ENDFOR

\FOR{$k = 1,\dots,N_B$}
    \STATE Sample $S \sim \mathrm{Unif}[\tau,\alpha]$
    \STATE Sample $U\sim\mathrm{Unif}[0,1]$, set $W = 1 - \delta U^{1/\xi}$
    \STATE Accept with probability
    $
    \frac{S}{\Gamma(1-S)}
    \frac{W^{-1-S}}{(1-\delta)^{-1-\alpha}}
    $
    \IF{accepted}
        \STATE Add weight $W$ to $\mathcal{W}_\varepsilon$
    \ENDIF
\ENDFOR

\STATE \textbf{return} $\mathcal{W}_\varepsilon$

\end{algorithmic}
\end{algorithm}

\subsection{Correctness}

\begin{theorem}
\label{thm:correctness}
The set of weights $\mathcal{W}_\varepsilon$ produced by the thinning procedure described above is a realization of a Poisson point process on $[\varepsilon,1]$ with intensity $\nu(w)\,\mathrm{d}w$ i.e. the $\varepsilon$-truncated RapidBeta process.
\end{theorem}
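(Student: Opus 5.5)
The plan is to combine four standard facts about Poisson point processes: the restriction, superposition, thinning and mapping theorems \cite{Kingman1993}. As set up in the subsection on the truncated process, $\mathcal{W}_\varepsilon$ is the image under the projection $(s,x)\mapsto x$ of a Poisson process $\Pi$ on $[\tau,\alpha]\times[\varepsilon,1]$ with intensity $\lambda$. The first step checks this: the projection is measurable, and the pushforward of $\lambda$ has density $\int_\tau^\alpha \lambda(\mathrm{d}s,\cdot)=\nu(w)\,\mathrm{d}w$ restricted to $[\varepsilon,1]$, which is finite there. So by the mapping theorem, any realisation of $\Pi$ projects to a realisation of the $\varepsilon$-truncated RapidBeta process. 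It therefore suffices to show that the accepted pairs $(S,W)$ generated by \cref{alg:rapidbeta} form a Poisson process with intensity $\lambda$ on $[\tau,\alpha]\times[\varepsilon,1]$.

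\textbf{Step 1: the two regions.} Since $[\varepsilon,1]=A_\varepsilon\sqcup B$, the superposition theorem reduces the claim to the two regions separately. The restrictions of $\Pi$ to $[\tau,\alpha]\times A_\varepsilon$ and to $[\tau,\alpha]\times B$ are independent Poisson processes with intensity $\lambda$ restricted to each piece. The two loops of the algorithm use independent randomness, so it is enough to show that each loop produces a Poisson process with the correct restricted intensity.

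\textbf{Step 2: the proposal processes.} On $A_\varepsilon$, define the dominating measure $\mu_A=\frac{\eta}{\alpha-\tau}B_A x^{-1-\alpha}\,\mathrm{d}s\,\mathrm{d}x$. Its total mass is $\eta B_A\int_\varepsilon^{1-\delta}x^{-1-\alpha}\,\mathrm{d}x=\Lambda_A$, since integrating over $s$ contributes the factor $\alpha-\tau$. Its normalisation is the product of $\mathrm{Unif}[\tau,\alpha]$ and the density $\propto x^{-1-\alpha}$ on $(\varepsilon,1-\delta]$. The inverse-CDF formula in the algorithm samples exactly that density; one checks this by inverting $\int_\varepsilon^{w}x^{-1-\alpha}\,\mathrm{d}x$. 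Drawing $N_A\sim\mathrm{Poisson}(\Lambda_A)$ and then $N_A$ i.i.d.\ points from the normalised measure gives a Poisson process with intensity $\mu_A$. This is the standard construction of a finite Poisson process. The same argument applies on $B$ with $\mu_B=\frac{\eta}{\alpha-\tau}(1-\delta)^{-1-\alpha}(1-x)^{\xi-1}\,\mathrm{d}s\,\mathrm{d}x$. Its total mass is $\Lambda_B$, and $W=1-\delta U^{1/\xi}$ inverts the CDF of the density $\propto(1-x)^{\xi-1}$ on $(1-\delta,1]$.

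\textbf{Step 3: thinning.} By the thinning (marking) theorem, keeping each point independently with probability $p(s,x)=\frac{\mathrm{d}\lambda}{\mathrm{d}\mu}(s,x)$ yields a Poisson process with intensity $p\,\mu=\lambda$. A direct computation shows that the acceptance probabilities in the algorithm are exactly these Radon--Nikodym derivatives on $A_\varepsilon$ and on $B$. The only thing left to verify is $0\le p\le 1$, i.e.\ that $\mu_A,\mu_B$ really dominate $\lambda$. I expect this to be the main (if elementary) obstacle, because it relies on three inequalities.
\begin{itemize}
\item $x^{-1-s}\le x^{-1-\alpha}$ for $x\le 1$ and $s\le\alpha$.
\item On $A_\varepsilon$ we have $1-x\in[\delta,1-\varepsilon]$, so $(1-x)^{\xi-1}\le\max\{1,\delta^{\xi-1}\}=B_A$, whichever sign $\xi-1$ has.
\item $\frac{s}{\Gamma(1-s)}\le 1$ on $[0,1]$. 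Here I would write $\frac{s}{\Gamma(1-s)}=\frac{s(1-s)}{\Gamma(2-s)}$, interpreted as $0$ at $s=1$. Then I would use $s(1-s)\le 1/4$ together with $\Gamma\ge\min_{y>0}\Gamma(y)>0.88$ on $[1,2]$.
\end{itemize}
On $B$, the domination reduces to $x^{-1-s}\le(1-\delta)^{-1-\alpha}$ for $x>1-\delta$ and $s\le\alpha$, again combined with $\frac{s}{\Gamma(1-s)}\le 1$.

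Combining Steps 1--3 with the mapping theorem gives the claim.
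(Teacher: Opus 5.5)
Your proposal is correct and follows the same route as the paper: thinning against the dominating measures on $A_\varepsilon$ and $B$, superposition over the two disjoint regions, and projection onto the $w$-coordinate. You also verify details the paper leaves implicit. These are that the Poisson counts and inverse-CDF draws realise the dominating processes, that the acceptance probabilities are the Radon--Nikodym derivatives, and that $s/\Gamma(1-s)\le 1$ on $[0,1]$, which the paper's domination bounds use without justification.
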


\subsection{Truncation error induced by the $\varepsilon$-approximation}
\label{sec:truncation_error}

The $\varepsilon$-approximation replaces the full weight collection
$\mathcal{W}=\{w_i\}$ by the truncated set $\mathcal{W}_\varepsilon$. A natural measure of the approximation error is the discarded total mass
\[
R_\varepsilon := \sum_{w_i \in \mathcal{W}: w_i \le \varepsilon} w_i.
\]

\begin{proposition}[Discarded mass under truncation]
\label{prop:discarded_mass}
Assume $\alpha=1$, then, as $\varepsilon \rightarrow 0$,
\[
\mathbb{E}[R_\varepsilon]
=
\Theta\!\left(\frac{1}{\ln(1/\varepsilon)}\right).
\]
In particular, the total mass discarded by truncating the CRM at level
$\varepsilon$ vanishes at a logarithmic rate.
\end{proposition}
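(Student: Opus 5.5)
By Campbell's theorem applied to the Poisson process $\mathcal{W}$ with rate measure $\nu$, the expected discarded mass is
\[
\mathbb{E}[R_\varepsilon]=\int_0^\varepsilon w\,\nu(w)\,\mathrm{d}w .
\]
The proof therefore reduces to a deterministic estimate of this integral as $\varepsilon\to 0$. I will use the two-sided asymptotics of Proposition~\ref{thm:AsymptoticsLevyLaplace} in the case $\alpha=1$. That proposition gives constants $0<c_1\le c_2<\infty$ and $w_0\in(0,1)$ such that
\[
c_1\, w^{-2}(\ln(1/w))^{-2}\le \nu(w)\le c_2\, w^{-2}(\ln(1/w))^{-2}
\qquad\text{for all } w\in(0,w_0).
\]
It is worth noting briefly why this holds. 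For $w$ small, $(1-w)^{\xi-1}$ is bounded above and below by positive constants. The factor $s/\Gamma(1-s)$ behaves like $(1-s)$ near $s=1$, since $\Gamma(1-s)\sim 1/(1-s)$. Substituting $u=1-s$ turns the integral $\int u\,w^{-2}e^{-u\ln(1/w)}\,\mathrm{d}u$ into one of order $w^{-2}(\ln(1/w))^{-2}$.

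Next I multiply by $w$ and integrate over $(0,\varepsilon)$ for $\varepsilon<w_0$. This sandwiches $\mathbb{E}[R_\varepsilon]$ between $c_1 I(\varepsilon)$ and $c_2 I(\varepsilon)$, where
\[
I(\varepsilon)=\int_0^\varepsilon \frac{\mathrm{d}w}{w(\ln(1/w))^{2}} .
\]
The substitution $v=\ln(1/w)$, $\mathrm{d}v=-\mathrm{d}w/w$, evaluates this exactly:
\[
I(\varepsilon)=\int_{\ln(1/\varepsilon)}^{\infty} v^{-2}\,\mathrm{d}v=\frac{1}{\ln(1/\varepsilon)} .
\]
Hence $c_1/\ln(1/\varepsilon)\le \mathbb{E}[R_\varepsilon]\le c_2/\ln(1/\varepsilon)$ for all $\varepsilon<w_0$, which is the claimed $\Theta$ bound.

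Finiteness is not an issue. The same computation shows the integral converges, which is consistent with the condition $\int_0^1 w\,\nu(\mathrm{d}w)<\infty$ already verified earlier.

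The only delicate point is relying on the $\Theta$ form of Proposition~\ref{thm:AsymptoticsLevyLaplace} rather than an exact equivalence. What is needed is that its constants hold uniformly on a whole neighbourhood $(0,w_0)$, not merely along a sequence of $w$ values. This is exactly what $\Theta$ as $w\to0$ means, so integrating the pointwise bounds is legitimate. If one wanted a sharper statement of the form $\mathbb{E}[R_\varepsilon]\sim C/\ln(1/\varepsilon)$, the sandwich argument would not suffice. One would instead need the exact equivalence $\nu(w)\sim\frac{\eta}{1-\tau}\,w^{-2}(\ln(1/w))^{-2}$, obtained from Laplace's method at $s=1$, together with dominated convergence after the substitution $v=\ln(1/w)$.
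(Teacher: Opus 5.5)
Your proposal is correct and matches the paper's proof step for step. Campbell's theorem gives $\mathbb{E}[R_\varepsilon]=\int_0^\varepsilon w\,\nu(w)\,\mathrm{d}w$; the two-sided $\Theta$-bounds of Proposition~\ref{thm:AsymptoticsLevyLaplace}, valid on a neighbourhood $(0,w_0)$, sandwich the integrand; and the substitution $v=\ln(1/w)$ evaluates $\int_0^\varepsilon \frac{\mathrm{d}w}{w(\ln(1/w))^{2}}=\frac{1}{\ln(1/\varepsilon)}$ exactly.
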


\subsection{Diagnostics}

To asses the quality of our algorithm in practice we perform two diagnostics. The first diagnostic compares the empirical tail counts $N(>x)=\sum_i\mathbf{1}_{W_i>x}$, averaged over repeated samples, to the theoretical tail measure $\bar{\nu}(x)=\int_x^1\nu(w)\mathrm{d}w$, leveraging the property that $\mathbb{E}[N(>x)] = \bar{\nu}(x)$ for a Poisson process. Agreement between empirical and theoretical curves indicates that the marginal tail behavior and intensity are correctly captured. The second diagnostic examines the transformed order statistics by plotting $\bar{\nu}(W_{(k)})$ against the rank k. For a correctly specified Poisson point process, these transformed values align with the identity line, reflecting the fact that the ordered points map to approximately unit-rate arrivals. Together, these diagnostics provide complementary validation: the former targets first-moment properties of the tail, while the latter probes the global distributional and structural consistency of the point process. In practice, our algorithm provides good performance; see \cref{fig:diagnosis}.

\begin{figure}[t]
    \centering
    \includegraphics[width=0.45\textwidth]{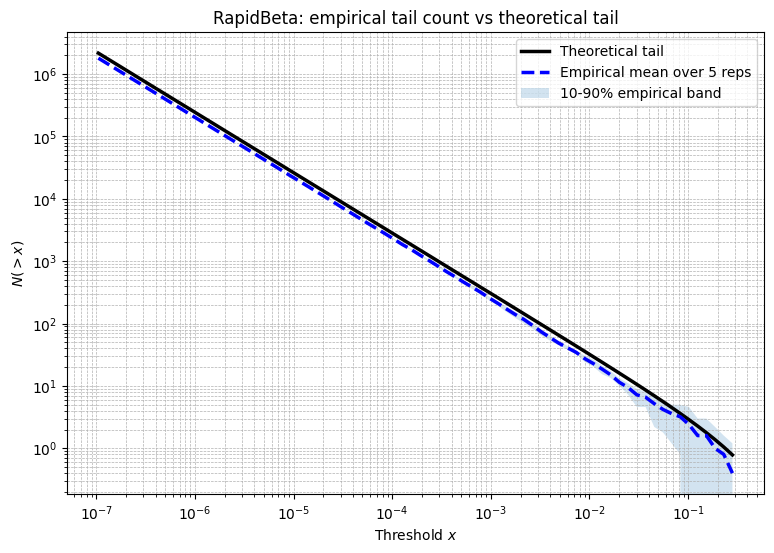}
        \includegraphics[width=0.45\textwidth]{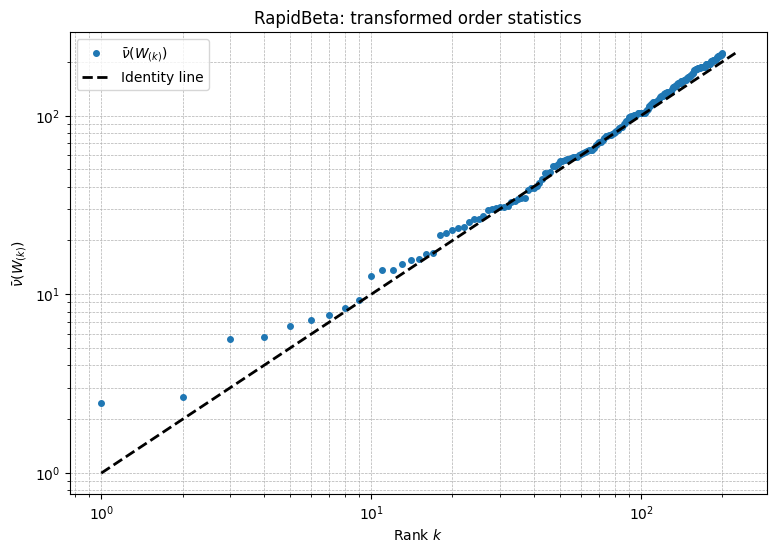}
       \caption{Diagnostics of the performance of our sampling algorithm, computed over 5 repetitions for a RapidBeta process with $\alpha=1, \tau=0.95, \xi=1.2$, and $\eta=15$ and $\varepsilon=10^{-7}$. We observe good performance.}
\label{fig:diagnosis}
\end{figure}

%//////////Simulations
\section{Simulations}
\label{sec:simu}

Once the weight generation procedure is available, we can generate the sequence of graphs following the graph frequency model (see \cref{alg:graph_simulation}).

\begin{algorithm}[t]
\caption{Graph Frequency Generative Model}
\label{alg:graph_simulation}
\begin{algorithmic}[1]
\STATE \textbf{Input:}  Weights $\mathbf{w} = (w_1, \dots, w_N)$; Time step $T$.
\STATE \textbf{Output:} An $N \times N$ edge multiplicity matrix $\mathbf{A}$.

\vspace{0.5em}
\STATE Let $N$ be the number of nodes, i.e., the length of $\mathbf{w}$.
\STATE Initialize $\mathbf{A}$ as an $N \times N$ matrix of zeros.

\FOR{$i = 1, \dots, N$}
    \FOR{$j = i + 1, \dots, N$}
        \STATE $p_{ij} \gets w_i w_j$ 
        \COMMENT{Connection probability}
        \STATE $A_{ij} \sim \text{Binomial}(T, p_{ij})$ 
        \COMMENT{Number of edges}
        \STATE $A_{ji} \gets A_{ij}$ 
        \COMMENT{Ensure symmetry for an undirected graph}
    \ENDFOR
\ENDFOR

\STATE \textbf{return} $\mathbf{A}$
\end{algorithmic}
\end{algorithm}

We compare our method to graph sequences generated by a three-parameter Beta process, as described in \cite{Cai2016b}. We sample the weights of the three-parameter Beta process using a Poisson thinning construction based on its Lévy measure. Recall that the Beta process is a discrete random measure whose atom weights are generated by a Poisson point process with Lévy density
\begin{equation}
 \nu(\mathrm{d}w) = \frac{\gamma\Gamma(1+\beta)}{\Gamma(1-\alpha)\Gamma(\beta+\alpha)} w^{-1-\alpha}(1-w)^{\beta+\alpha-1}\mathrm{d}w
\end{equation}
where $\alpha \in (0,1)$ and $\beta,\gamma>0$. We simulate the associated Poisson process by first drawing candidate jumps from a dominating stable Lévy density proportional to $w^{-1-\alpha}$, whose inverse tail admits a closed-form expression. Each proposed jump is then accepted with probability $(1-w)^{\beta+\alpha-1}$, yielding an exact thinning scheme whenever $\beta+\alpha-1 \ge 0$. This approach avoids the numerical instabilities of stick-breaking representations \cite{Broderick2012a} when $\alpha$ is close to one, while preserving the correct distribution of large weights.

The simulations are displayed in \cref{fig:comparaison}, as expected, the RapidBeta process produces sparser graph sequences than the three Beta process. We also provide a comparison to the Barabási-Albert model \cite{Barabasi1999a}, which produces even sparser sequences ($N^{(e)}_t = \Theta(N_t)$) but lacks exchangeability.

\begin{figure*}[h!]
    \centering
    \begin{subfigure}[b]{0.48\textwidth}
        \centering
        \includegraphics[width=\textwidth]{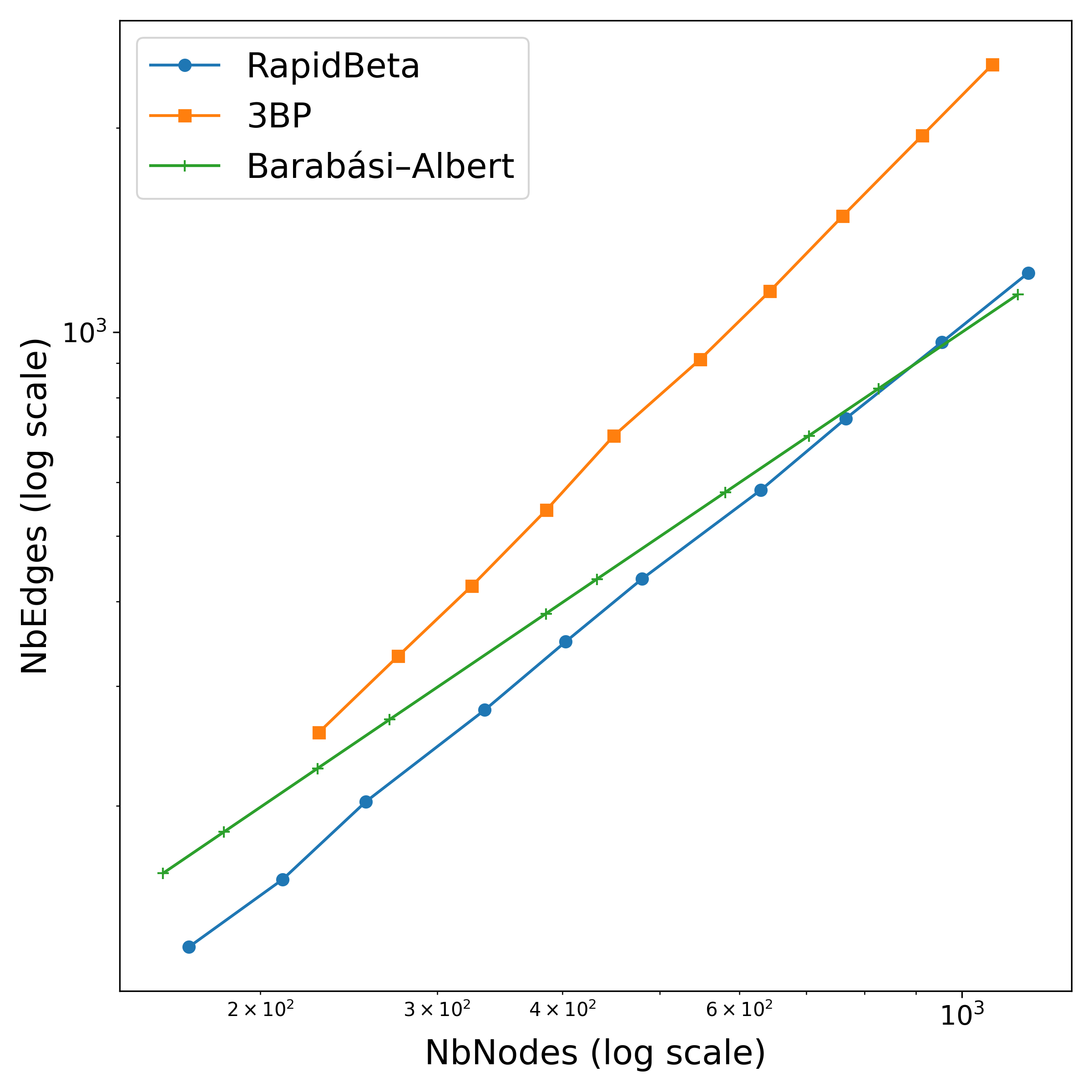}
    \end{subfigure}
    \hfill
    \begin{subfigure}[b]{0.48\textwidth}
        \centering
        \includegraphics[width=\textwidth]{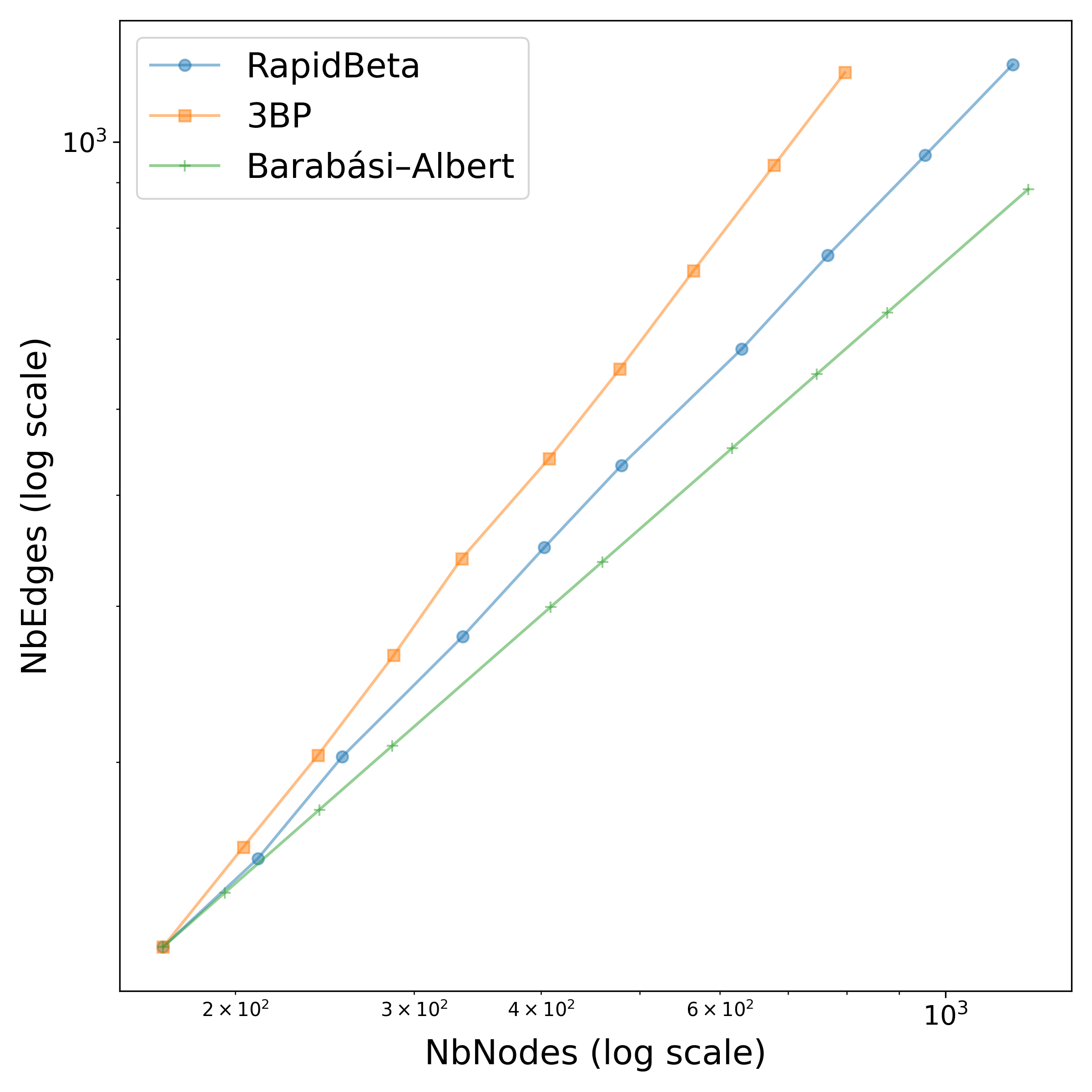}
    \end{subfigure}
   \caption{(Right: number of edges ($N_t^{(e)}$) versus number of nodes ($N_t$) for the \textbf{RapidBeta} model ({\Large$\bullet$}) was simulated with parameters $\alpha=1, \tau=0.95, \xi=1.2$, and $\eta=15$. This is compared against the \textbf{three-parameter Beta process} model ({\footnotesize$\blacksquare$}) with parameters $\gamma=4, \beta=4, \alpha=0.7$, and the \textbf{Barabási–Albert} model ($\blacktriangle$). Simulations were run for $t$ ranging from 30 to 300. Each point represents the mean over ten independent graph sequence samples. Left: same plot with all data shifted so the first points coincide which eases the comparison of the slopes in log-log scale.}
\label{fig:comparaison}
\end{figure*}

The code used to generate the edge-exchangeable graphs can be found at: \url{https://github.com/ValentinKil/ExtremelySparseEdgesExchangeable.git}.

\section{Conclusion}

In this work, we presented a graph generative model for edge-exchangeable networks. We first showed that the extremely sparse regime can be achieved through a sequence of edge-exchangeable graphs. We then proposed a practical generative model that enables the simulation of these sequences. Our experimental results demonstrate that, despite necessary computational approximations, the model effectively captures the extremely sparse regime.

While \citet{Li2021} provide rigorous quantitative bounds on the error for fixed-rank truncation of edge-exchangeable networks, equivalent theoretical guarantees for the threshold $\varepsilon$-truncation employed in our sampler are not yet available. Extending their error analysis to weight-based thresholds is a promising direction for future work.

Extreme sparsity for edge-exchangeable simple graphs has previously been studied by \citet{Janson2018}, who proposed two examples that reach exact linear dependence ($N^{(e)}_t = \Theta(N_t)$). However, the produced graphs exhibit some unwanted properties, such as bounded degrees or a structure composed mostly of stars. By relaxing the strict linearity requirement and accepting linear scaling \textit{up to a slowly varying function}, we are able to generate a much richer, more realistic internal graph structure with the RapidBeta process; indeed, experiments show the emergence of a giant connected component. A direct application of Theorem 14 in \citet{Eriksson2025} demonstrates that the RapidBeta graphs are almost surely not eventually forever connected, meaning there will always be some edges that do not join the giant component. To the best of our knowledge, there are no more precise results regarding the giant component of edge-exchangeable graphs, although \citet{Eriksson2025} suggests some interesting directions.

Other properties of the generative model, such as the power-law behaviour of the degree distribution, the clustering coefficient, and the small-world property, remain to be explored. Additionally, developing inference procedures for RapidBeta edge-exchangeable graphs is a compelling avenue for future research.

\section*{Acknowledgements}
I would like to thank François Caron for his insightful advice and thorough proofreading of this manuscript. I am funded by the Clarendon Scholarship.

\bibliographystyle{icml2026}
\bibliography{edgeexch}

@incollection{Lijoi2010,
  author =        {Lijoi, A. and Pr{\"u}nster, I.},
  booktitle =     {Bayesian Nonparametrics},
  editor =        {Hjort, N. L. and Holmes, C. and M{\"u}ller, P. and
                   Walker, S. G.},
  publisher =     {Cambridge University Press},
  title =         {Models beyond the {{Dirichlet}} Process},
  year =          {2010},
}

@article{Caron2017,
  author =        {Caron, F. and Fox, E.},
  journal =       {Journal of the Royal Statistical Society. Series B
                   (Statistical Methodology)},
  pages =         {1295--1366},
  title =         {Sparse Graphs Using Exchangeable Random Measures},
  volume =        {79},
  year =          {2017},
}

@article{Gnedin2007,
  author =        {Gnedin, A. and Hansen, B. and Pitman, J.},
  journal =       {Probab. Surv},
  number =        {146-171},
  pages =         {88},
  title =         {Notes on the Occupancy Problem with Infinitely Many
                   Boxes: General Asymptotics and Power Laws},
  volume =        {4},
  year =          {2007},
}

@article{Caron2023,
  author =        {Caron, Fran{\c c}ois and Panero, Francesca and
                   Rousseau, Judith},
  journal =       {Advances in Applied Probability},
  month =         dec,
  number =        {4},
  pages =         {1211--1253},
  title =         {On Sparsity, Power-Law, and Clustering Properties of
                   Graphex Processes},
  volume =        {55},
  year =          {2023},
  issn =          {0001-8678, 1475-6064},
}

@article{Kingman1967,
  author =        {Kingman, J.F.C.},
  journal =       {Pacific Journal of Mathematics},
  number =        {1},
  pages =         {59--78},
  publisher =     {Pacific Journal of Mathematics, A Non-profit
                   Corporation},
  title =         {Completely Random Measures},
  volume =        {21},
  year =          {1967},
}

@book{Kingman1993,
  author =        {Kingman, J.F.C.},
  publisher =     {Oxford University Press, USA},
  title =         {Poisson Processes},
  volume =        {3},
  year =          {1993},
}

@article{Veitch2015,
  author =        {Veitch, Victor and Roy, Daniel M.},
  month =         dec,
  journal =        {arXiv:1512.03099},
  publisher =     {arXiv},
  title =         {The {{Class}} of {{Random Graphs Arising}} from
                   {{Exchangeable Random Measures}}},
  year =          {2015},
}

@article{Borgs2018,
  title={Sparse exchangeable graphs and their limits via graphon processes},
  author={Borgs, Christian and Chayes, Jennifer T and Cohn, Henry and Holden, Nina},
  journal={JMLR},
  volume={18},
  number={210},
  pages={1--71},
  year={2018}
}

@article{Barabasi1999a,
  title={Emergence of scaling in random networks},
  author={Barab{\'a}si, Albert-L{\'a}szl{\'o} and Albert, R{\'e}ka},
  journal={Science},
  volume={286},
  number={5439},
  pages={509--512},
  year={1999},
  publisher={American Association for the Advancement of Science}
}

@article{Todeschini2020,
  author =        {Todeschini, Adrien and Miscouridou, Xenia and
                   Caron, Fran{\c c}ois},
  journal =       {Journal of the Royal Statistical Society: Series B
                   (Statistical Methodology)},
  number =        {2},
  pages =         {487--520},
  title =         {Exchangeable Random Measures for Sparse and Modular
                   Graphs with Overlapping Communities},
  volume =        {82},
  year =          {2020},
  doi =           {10.1111/rssb.12363},
  issn =          {1467-9868},
}

@article{Naik2021,
  author =        {Naik, Cian and Caron, Fran{\c c}ois and
                   Rousseau, Judith},
  journal =       {Electronic Journal of Statistics},
  month =         jan,
  number =        {1},
  pages =         {1814--1868},
  publisher =     {{Institute of Mathematical Statistics and Bernoulli
                   Society}},
  title =         {Sparse Networks with Core-Periphery Structure},
  volume =        {15},
  year =          {2021},
  issn =          {1935-7524, 1935-7524},
}

@article{Janson2018,
  author =        {Janson, Svante},
  journal =       {Journal of Statistical Physics},
  month =         nov,
  number =        {3},
  pages =         {448--484},
  title =         {On {{Edge Exchangeable Random Graphs}}},
  volume =        {173},
  year =          {2018},
  doi =           {10.1007/s10955-017-1832-9},
  issn =          {1572-9613},
}

@book{Bingham1987,
  author =        {Bingham, N. H and Goldie, C. M. and Teugels, J. L.},
  publisher =     {Cambridge university press},
  title =         {Regular Variation},
  volume =        {27},
  year =          {1987},
}

@book{Feller1971,
  author =        {Feller, W.},
  publisher =     {John Wiley \& Sons},
  title =         {An Introduction to Probability Theory and Its
                   Applications},
  volume =        {2},
  year =          {1971},
}

@article{Kilian2025,
  title = {Rapidly {{Varying Completely Random Measures}} for {{Modeling Extremely Sparse Networks}}},
  author = {Kilian, Valentin and Guedj, Benjamin and Caron, Fran{\c c}ois},
  year = {2025},
  month = may,
  journal = {arXiv:2505.13206},
  publisher = {arXiv},
}

@article{Broderick2016,
  title = {Edge-Exchangeable Graphs and Sparsity},
  author = {Broderick, Tamara and Cai, Diana},
  year = {2016},
  month = mar,
  journal = {arXiv:1603.06898},
  eprint = {1603.06898},
  primaryclass = {math},
  publisher = {arXiv},
  urldate = {2025-07-10},
  archiveprefix = {arXiv}
}

@inproceedings{Cai2016b,
  title = {Edge-Exchangeable Graphs and Sparsity},
  booktitle = {Advances in {{Neural Information Processing Systems}}},
  author = {Cai, Diana and Campbell, Trevor and Broderick, Tamara},
  year = {2016},
  volume = {29},
  urldate = {2025-07-10}
}

@article{Crane2015,
  title = {A Framework for Statistical Network Modeling},
  author = {Crane, Harry and Dempsey, Walter},
  year = {2015},
  month = sep,
  journal = {arXiv:1509.08185},
  eprint = {1509.08185},
  primaryclass = {math},
  publisher = {arXiv},
  urldate = {2025-07-10},
  archiveprefix = {arXiv}
}

@article{Crane2016,
  title = {Edge Exchangeable Models for Network Data},
  author = {Crane, Harry and Dempsey, Walter},
  year = {2016},
  month = oct,
  journal = {arXiv:1603.04571},
  eprint = {1603.04571},
  primaryclass = {math},
  publisher = {arXiv},
  urldate = {2025-07-10},
  archiveprefix = {arXiv}
}

@article{Aldous1981,
  title = {Representations for Partially Exchangeable Arrays of Random Variables},
  author = {Aldous, David J.},
  year = {1981},
  month = dec,
  journal = {Journal of Multivariate Analysis},
  volume = {11},
  number = {4},
  pages = {581--598},
  issn = {0047-259X},
  doi = {10.1016/0047-259X(81)90099-3},
  urldate = {2024-09-03}
}

@misc{Hoover1979,
  title = {Relations on {{Probability Spaces}} and {{Arrays}} of {{Random Variables}}},
  author = {Hoover, Douglas N.},
  year = {1979},
  number = {NJ08540},
  eprint = {NJ08540},
  publisher = {Institute for Advanced Study, Princeton},
  archiveprefix = {Institute for Advanced Study, Princeton}
}

@article{Orbanz2015,
  title = {Bayesian {{Models}} of {{Graphs}}, {{Arrays}} and {{Other Exchangeable Random Structures}}},
  author = {Orbanz, Peter and Roy, Daniel M.},
  year = {2015},
  month = feb,
  journal = {IEEE Transactions on Pattern Analysis and Machine Intelligence},
  volume = {37},
  number = {2},
  pages = {437--461},
  issn = {0162-8828, 2160-9292},
  doi = {10.1109/TPAMI.2014.2334607},
  urldate = {2024-02-06},
  langid = {english}
}

@misc{Naik2022,
  title = {Bayesian {{Nonparametrics}} for {{Sparse Dynamic Networks}}},
  author = {Naik, Cian and Caron, Francois and Rousseau, Judith and Teh, Yee Whye and Palla, Konstantina},
  year = {2022},
  month = apr,
  number = {arXiv:1607.01624},
  eprint = {1607.01624},
  primaryclass = {stat},
  publisher = {arXiv},
  doi = {10.48550/arXiv.1607.01624},
  urldate = {2025-07-10},
  archiveprefix = {arXiv}
}

@article{Broderick2012a,
  title = {Beta {{Processes}}, {{Stick-Breaking}} and {{Power Laws}}},
  author = {Broderick, Tamara and Jordan, Michael I. and Pitman, Jim},
  year = {2012},
  month = jun,
  journal = {Bayesian Analysis},
  volume = {7},
  number = {2},
  pages = {439--476},
  publisher = {International Society for Bayesian Analysis},
  issn = {1936-0975, 1931-6690},
  urldate = {2025-07-16}
}

@book{Bollobas2001,
  title = {Random {{Graphs}}},
  author = {Bollob{\'a}s, B{\'e}la},
  year = {2001},
  month = aug,
  publisher = {Cambridge University Press},
  googlebooks = {o9WecWgilzYC},
  isbn = {978-0-521-79722-1},
  langid = {english}
}

@book{Kallenberg2005,
  title = {Probabilistic {{Symmetries}} and {{Invariance Principles}}},
  author = {Kallenberg, Olav},
  year = {2005},
  month = dec,
  publisher = {Springer Science \& Business Media},
  isbn = {978-0-387-28861-1},
  langid = {english}
}

@article{Miscouridou2026,
  author = {Miscouridou, Xenia and Panero, Francesca and Laos, Antreas},
  month = apr,
  journal = {arXiv:2512.10717},
  publisher = {arXiv},
  title = {Dynamic Sparse Graphs with Overlapping Communities},
  year = 2026,

}

@article{Eriksson2025,
  title = {Edge {{Exchangeable Graphs}}: {{Connectedness}}, {{Gaussianity}} and {{Completeness}}},
  shorttitle = {Edge {{Exchangeable Graphs}}},
  author = {Eriksson, Edward},
  year = 2025,
  month = jan,
  journal = {arXiv:2501.09511},
  eprint = {2501.09511},
  primaryclass = {math},
  publisher = {arXiv},
  doi = {10.48550/arXiv.2501.09511},
  urldate = {2026-04-14},
  archiveprefix = {arXiv}
}

@article{Li2021,
  title = {Truncated Simulation and Inference in Edge-Exchangeable Networks},
  author = {Li, Xinglong and Campbell, Trevor},
  year = 2021,
  month = jan,
  journal = {Electronic Journal of Statistics},
  volume = {15},
  number = {2},
  pages = {5117--5157},
  publisher = {{Institute of Mathematical Statistics and Bernoulli Society}},
  issn = {1935-7524, 1935-7524},
  urldate = {2026-04-14},
  langid = {english}
}

@article{Campbell2018a,
  title = {Exchangeable Trait Allocations},
  author = {Campbell, Trevor and Cai, Diana and Broderick, Tamara},
  year = 2018,
  month = jan,
  journal = {Electronic Journal of Statistics},
  volume = {12},
  number = {2},
  pages = {2290--2322},
  publisher = {{Institute of Mathematical Statistics and Bernoulli Society}},
  issn = {1935-7524, 1935-7524},
  urldate = {2026-04-14},
  langid = {english}
}

@book{Mitzenmacher2005,
  title = {Probability and {{Computing}}: {{Randomized Algorithms}} and {{Probabilistic Analysis}}},
  shorttitle = {Probability and {{Computing}}},
  author = {Mitzenmacher, Michael and Upfal, Eli},
  year = 2005,
  month = jan,
  publisher = {Cambridge University Press},
  googlebooks = {0bAYl6d7hvkC},
  isbn = {978-0-521-83540-4},
  langid = {english}
}

@article{Freedman1975,
  title = {On {{Tail Probabilities}} for {{Martingales}}},
  author = {Freedman, David A.},
  year = 1975,
  month = feb,
  journal = {The Annals of Probability},
  volume = {3},
  number = {1},
  pages = {100--118},
  publisher = {Institute of Mathematical Statistics},
  issn = {0091-1798, 2168-894X},
  doi = {10.1214/aop/1176996452},
  urldate = {2026-05-31},
  langid = {english}
}

@article{Tropp2011,
  title = {Freedman's Inequality for Matrix Martingales},
  author = {Tropp, Joel},
  year = 2011,
  month = jan,
  journal = {Electronic Communications in Probability},
  volume = {16},
  number = {none},
  issn = {1083-589X},
  doi = {10.1214/ECP.v16-1624},
  urldate = {2026-05-31},
  langid = {english}
}

%%%%%%%%%%%%%%%%%%%%%%%%%%%%%%%%%%%%%%%%%%%%%%%%%%%%%%%%%%%%%%%%%%%%%%%%%%%%%%%
%%%%%%%%%%%%%%%%%%%%%%%%%%%%%%%%%%%%%%%%%%%%%%%%%%%%%%%%%%%%%%%%%%%%%%%%%%%%%%%
% APPENDIX
%%%%%%%%%%%%%%%%%%%%%%%%%%%%%%%%%%%%%%%%%%%%%%%%%%%%%%%%%%%%%%%%%%%%%%%%%%%%%%%
%%%%%%%%%%%%%%%%%%%%%%%%%%%%%%%%%%%%%%%%%%%%%%%%%%%%%%%%%%%%%%%%%%%%%%%%%%%%%%%
\newpage
\appendix
\onecolumn

% !TEX root = article.tex

\section{Asymptotic notation}
\label{sec:asymp}

Throughout this article, we write:
\begin{itemize}
    \item $f(x) \underset{x \to a}{=} O(g(x))$ if there exists $C > 0$ such that, for $x$ in a neighbourhood of $a$, $|f(x)| \le C|g(x)|$;
    \item $f(x) \underset{x \to a}{=} \Omega(g(x))$ if there exists $C > 0$ such that, for $x$ in a neighbourhood of $a$, $|f(x)| \ge C|g(x)|$;
    \item $f(x) \underset{x \to a}{=} \Theta(g(x))$ if both $f(x) \underset{x \to a}{=} O(g(x))$ and $f(x) \underset{x \to a}{=} \Omega(g(x))$ hold;
    \item $f(x) \underset{x \to a}{\sim} g(x)$ if $\lim_{x \to a} f(x)/g(x) = 1$.
\end{itemize}
where $a \in \mathbb{R} \cup \{-\infty, +\infty\}$. When $f(x)$ and $g(x)$ are random variables, the notation indicates that the relation holds almost surely. This usage of $\sim$ should not be confused with the standard distributional notation $X \sim \mathcal{D}$, which indicates that the random variable $X$ has distribution $\mathcal{D}$.

\section{Main lemma}

\begin{lemma}
\label{lem:xvsex}
Let $(\mathcal G_t)_{t\geq 0}$ be the edge-exchangeable multigraph sequence constructed from the weights $\mathcal W=\{w_i\}_{i\geq 1}$, and let $(N_t)_{t\geq 0}$ and $(N^{(e)}_t)_{t\geq 0}$ denote its associated vertex and edge counts. For $t\geq 1$, define the quenched means
\begin{align*}
\mu_{\mathcal W}(t)
    &:= \bbE\left(N_t\mid \mathcal W\right)
      = \sum_i \left[1-\prod_{j\neq i}(1-w_iw_j)^t\right],\\
\mu^{(e)}_{\mathcal W}(t)
    &:= \bbE\left(N_t^{(e)}\mid \mathcal W\right)
      = t\sum_{i<j} w_iw_j.
\end{align*}
Then, for almost every realization of $\mathcal W$ such that, for every
$\epsilon>0$,
\begin{align}
\sum_{t=1}^\infty
\exp\left\{
    -
    \frac{\epsilon^2\mu_{\mathcal W}(t)^2}
         {4(2\mu^{(e)}_{\mathcal W}(t)+\epsilon\mu_{\mathcal W}(t)/3)}
\right\}
    &<\infty, \label{eq:vertex-summability}
\end{align}
we have
\begin{align*}
N_t \overset{\mathrm{a.s.}}{\sim}
    \bbE\left(N_t\mid \mathcal W\right),
\qquad
N_t^{(e)} \overset{\mathrm{a.s.}}{\sim}
    \bbE\left(N_t^{(e)}\mid \mathcal W\right).
\end{align*}
\end{lemma}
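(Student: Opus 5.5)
Throughout, condition on $\mathcal W$. The multiplicities $m^p_{\{i,j\}}$ are then independent Bernoulli$(w_iw_j)$ variables across pairs and steps. We work on the event $\sum_i w_i<\infty$, which has probability one, so that $\mu^{(e)}_{\mathcal W}(t)<\infty$. The plan is to prove a quenched concentration inequality for each count, show the resulting tail probabilities are summable in $t$, and conclude with Borel--Cantelli.

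\textbf{Edges.} Here $N^{(e)}_t=\sum_{i<j}M^t_{\{i,j\}}$ is a sum of $t\binom{\cdot}{2}$ independent Bernoulli variables. Its mean is $\mu^{(e)}_{\mathcal W}(t)=tS$, where $S=\sum_{i<j}w_iw_j\in(0,\infty)$, and its variance is at most that mean. Bernstein's inequality gives
$\bbP(|N^{(e)}_t-tS|>\epsilon tS\mid\mathcal W)\le 2\exp\{-\epsilon^2 tS/(2(1+\epsilon/3))\}$. This is summable in $t$, so Borel--Cantelli gives $N^{(e)}_t\sim tS$ almost surely. No hypothesis is needed for this count. (Alternatively, $N^{(e)}_t$ is a sum of i.i.d.\ increments in $t$ with finite mean $S$, so the strong law of large numbers applies directly.)

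\textbf{Vertices.} $N_t$ is not a sum of independent terms, because one edge activates two vertices. I would write it as a function of the independent variables $(M^t_{\{i,j\}})_{i<j}$, or better of the independent per-step indicators $m^p_{\{i,j\}}$. I would then apply a Freedman/Bernstein-type martingale inequality (\citealp{Freedman1975}) to the Doob martingale obtained by revealing the pairs $\{i,j\}$ one at a time. Changing a single pair changes $N_t$ by at most $2$. The conditional variance of the increment for pair $\{i,j\}$ is bounded by $4\bbP(M^t_{\{i,j\}}>0)\le 4t w_iw_j$. The predictable quadratic variation is therefore at most $4\mu^{(e)}_{\mathcal W}(t)$, up to the factor convention; summing over ordered versus unordered pairs gives $8\mu^{(e)}$ or $4\mu^{(e)}$. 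Freedman's inequality with increments bounded by $2$ and deviation $\epsilon\mu_{\mathcal W}(t)$ then yields
\[
\bbP\left(|N_t-\mu_{\mathcal W}(t)|>\epsilon\mu_{\mathcal W}(t)\mid\mathcal W\right)\le 2\exp\left\{-\frac{\epsilon^2\mu_{\mathcal W}(t)^2}{4(2\mu^{(e)}_{\mathcal W}(t)+\epsilon\mu_{\mathcal W}(t)/3)}\right\},
\]
which is exactly the summand in \eqref{eq:vertex-summability}. The same bound could also come from a self-bounding or Efron--Stein-type argument. By \eqref{eq:vertex-summability} and Borel--Cantelli, for each rational $\epsilon$ we get $|N_t/\mu_{\mathcal W}(t)-1|\le\epsilon$ eventually, almost surely. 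Taking a countable intersection over rational $\epsilon$ gives $N_t\sim\mu_{\mathcal W}(t)$. The conditional statements hold for almost every $\mathcal W$ satisfying the hypothesis, and Fubini turns them into almost-sure statements.

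\textbf{Main obstacle.} The hard step is getting the correct constants in the martingale variance bound. Revealing a pair $\{i,j\}$ changes the conditional expectation of $N_t$ in two ways: the pair can itself activate up to two vertices, and it also shifts the conditional probabilities that $i$ and $j$ are activated by pairs not yet revealed. Bounding the conditional variance of each increment by $4\,t w_iw_j$ requires care. I would handle it by revealing the pairs in a fixed order and noting that the increment is nonzero only when $M^t_{\{i,j\}}>0$ or through a deterministic compensator of size at most $2\bbP(M^t_{\{i,j\}}>0)$. Together these give a second moment of at most $4\bbP(M^t_{\{i,j\}}>0)$. Since there are infinitely many pairs, I would first prove the bound for a truncation to finitely many vertices and then pass to the limit by monotone convergence.
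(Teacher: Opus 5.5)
Your proposal is correct and follows essentially the paper's proof. For the edge count you use concentration plus Borel--Cantelli, with Bernstein or the SLLN where the paper uses a Chernoff bound. For the vertex count you use the same Doob martingale over revealed pairs: increments bounded by $2$, conditional variance exactly $q(1-q)\Delta^2\le 4tw_iw_j$ for the two-point increment, Freedman with $R=2$ and $\sigma^2=4\mu^{(e)}_{\mathcal W}(t)$, and a finite truncation. The only step to tighten is the limit passage. Monotone convergence gives you $N_t(m)\uparrow N_t$ and $\mu_{\mathcal W}(t,m)\uparrow\mu_{\mathcal W}(t)$, but transferring the tail bound to the limit is a Fatou (Portmanteau) step, as in the paper, and it relies on the variance bound $4\mu^{(e)}_{\mathcal W}(t)$ being uniform in $m$.
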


\begin{proof}
Fix a realization of $\mathcal W$ such that
\[
\sum_i w_i<\infty.
\]
This event has probability one by the assumptions on $\nu$.
Throughout the proof, we work conditionally on this realization of
$\mathcal W$, so the numbers $(w_i)_{i\geq 1}$ are deterministic.

Write
\[
p_{ij}:=w_iw_j,\qquad i<j.
\]
Since $\sum_i w_i<\infty$, we have
\[
\sum_{i<j} p_{ij}
\leq \frac12\left(\sum_i w_i\right)^2<\infty.
\]
In particular, the conditional means below are finite for every fixed $t$.

Conditionally on $\mathcal W$, the edge multiplicities
$M^t_{\{i,j\}}$, $i<j$, are independent and satisfy
\[
M^t_{\{i,j\}}\sim \operatorname{Binom}(t,p_{ij}).
\]
Therefore
\[
N_t^{(e)}
=
\frac12\sum_{i\neq j} M^t_{\{i,j\}}
=
\sum_{i<j} M^t_{\{i,j\}}.
\]
Hence
\[
\mu^{(e)}_{\mathcal W}(t)
=
\bbE\left(N_t^{(e)}\mid \mathcal W\right)
=
t\sum_{i<j} p_{ij}.
\]

\paragraph{Edge count.}
For each $i<j$, write
\[
M^t_{\{i,j\}}
=
\sum_{s=1}^t B^{(s)}_{ij},
\]
where the variables $B^{(s)}_{ij}$ are independent Bernoulli random variables with success probability $p_{ij}$.
Thus, $N_t^{(e)}$ is a countable sum of independent Bernoulli random variables with total mean $\mu^{(e)}_{\mathcal W}(t)$. Applying \cref{thm:chernoff}, we obtain, for every $\epsilon\in(0,1)$,
\begin{align*}
\bbP\left(
    \left|N_t^{(e)}-\mu^{(e)}_{\mathcal W}(t)\right|
        >\epsilon \mu^{(e)}_{\mathcal W}(t)
    \,\middle|\,\mathcal W
\right)
&\leq
2\exp\left\{-\frac{\epsilon^2}{3}\mu^{(e)}_{\mathcal W}(t)\right\}\\
&\leq
2\exp\left\{-\frac{\epsilon^2}{3}t\sum_{i<j} p_{ij}\right\}.
\end{align*}
Since $\sum_{i<j}p_{ij}>0$ almost surely, the right-hand side is summable in $t$. Hence, by the Borel--Cantelli lemma,
\[
\frac{N_t^{(e)}}{\mu^{(e)}_{\mathcal W}(t)}
\longrightarrow 1
\]
almost surely, conditionally on $\mathcal W$.

\paragraph{Vertex count.}
The vertex indicators are not conditionally independent. Indeed, the indicators that vertices $i$ and $j$ are active both depend on the edge variable $M^t_{\{i,j\}}$. Therefore, one cannot prove the vertex part using the same Chernoff bound as for the edge count.

For $i\geq 1$, define
\[
A_{i,t}:=\mathbf 1\left\{\sum_{j\neq i} M^t_{\{i,j\}}>0\right\}.
\]
Then
\[
N_t=\sum_i A_{i,t},
\qquad
\mu_{\mathcal W}(t)
=
\sum_i \bbE(A_{i,t}\mid \mathcal W)
=
\sum_i \left[1-\prod_{j\neq i}(1-p_{ij})^t\right].
\]
We shall use a martingale bounded-differences argument.

First, truncate the edge set to the finite collection
\[
\mathcal I_m:=\{(i,j):1\leq i<j\leq m\}.
\]
Let $N_t(m)$ be the number of active vertices in the graph obtained by keeping only the edge indicators with $(i,j)\in\mathcal I_m$, and let
\[
\mu_{\mathcal W}(t,m)
    :=\bbE\left(N_t(m)\mid \mathcal W\right).
\]
The variables $\{M^t_{\{i,j\}}:(i,j)\in\mathcal I_m\}$ are independent.
Reveal them one at a time, in any deterministic order, and let $(Z_k)$ be the Doob martingale
\[
Z_k
:=
\bbE\left(
    N_t(m)
    \,\middle|\,
    \mathcal W,
    M^t_{e_1},\ldots,M^t_{e_k}
\right),
\]
where $e_1,\ldots,e_{|\mathcal I_m|}$ is the chosen ordering of $\mathcal I_m$.

Changing a single edge multiplicity can change the number of active vertices by at most $2$, because only the two endpoints of that edge can change their active/inactive status. Hence, the martingale increments satisfy
\[
|Z_k-Z_{k-1}|\leq 2.
\]

Moreover, if $e_k=(i,j)$, then the conditional variance of the $k$th martingale increment is bounded by
\[
\bbE[(Z_k-Z_{k-1})^2\mid Z_{0:k-1}]
    \leq 4tp_{ij}.
\]
Indeed, conditionally on the previously revealed variables, the only remaining randomness in this step is the binomial variable $M^t_{{i,j}}$. The functional $N_t(m)$ depends on this variable only through whether $M^t_{{i,j}}=0$ or $M^t_{{i,j}}>0$. Thus the conditional expectation after revealing $M^t_{{i,j}}$ can take two possible values, whose difference is at most $2$, since changing the status of the edge ${i,j}$ can affect only the two endpoints $i$ and $j$.

Therefore, the predictable quadratic variation of the martingale is bounded by
\[
\sum_{(i,j)\in\mathcal I_m} 4tp_{ij}
\leq
4\mu^{(e)}_{\mathcal W}(t).
\]

We can therefore apply Freedman's inequality (\cref{thm:freedman}) to the martingale
\[
Y_k:=Z_k-\mu_{\mathcal W}(t,m),
\]
with $R=2$ and $\sigma^2=4\mu^{(e)}_{\mathcal W}(t)$. For every $s>0$,
\[
\bbP\!\left(
N_t(m)-\mu_{\mathcal W}(t,m)\ge s
\mid \mathcal W
\right)
\leq
\exp\left\{
    -\frac{s^2}
    {2\left(4\mu^{(e)}_{\mathcal W}(t)+2s/3\right)}
\right\}.
\]

Applying the same argument to $-Y_k$ yields the lower-tail bound
\[
\bbP\!\left(
N_t(m)-\mu_{\mathcal W}(t,m)\le -s
\mid \mathcal W
\right)
\leq
\exp\left\{
    -\frac{s^2}
    {2\left(4\mu^{(e)}_{\mathcal W}(t)+2s/3\right)}
\right\}.
\]

Combining the two inequalities, we obtain
\[
\bbP\left(
    \left|N_t(m)-\mu_{\mathcal W}(t,m)\right|\geq s
    \,\middle|\, \mathcal W
\right)
\leq
2\exp\left\{
    -\frac{s^2}
    {2\left(4\mu^{(e)}_{\mathcal W}(t)+2s/3\right)}
\right\}.
\]

Since
\[
N_t(m)-\mu_{\mathcal W}(t,m)
\to
N_t-\mu_{\mathcal W}(t)
\qquad\text{almost surely},
\]
it follows that
\[
\{|N_t-\mu_{\mathcal W}(t)|> s\}
\subseteq
\liminf_{m\to\infty}
\{|N_t(m)-\mu_{\mathcal W}(t,m)|> s\}.
\]
Therefore, by Fatou's lemma,
\[
\bbP\!\left(
|N_t-\mu_{\mathcal W}(t)|> s
\,\middle|\,\mathcal W
\right)
\le
\liminf_{m\to\infty}
\bbP\!\left(
|N_t(m)-\mu_{\mathcal W}(t,m)|> s
\,\middle|\,\mathcal W
\right).
\]

Applying the previous Freedman bound yields
\[
\bbP\!\left(
|N_t-\mu_{\mathcal W}(t)|> s
\,\middle|\,\mathcal W
\right)
\le
2\exp\!\left\{
-\frac{s^2}
{2\bigl(4\mu^{(e)}_{\mathcal W}(t)+2s/3\bigr)}
\right\}.
\]

Taking $s=\epsilon \mu_{\mathcal W}(t)$, we obtain
\[
\bbP\left(
    \left|N_t-\mu_{\mathcal W}(t)\right|
        > \epsilon \mu_{\mathcal W}(t)
    \,\middle|\, \mathcal W
\right)
\leq
2\exp\left\{
    -
    \frac{\epsilon^2\mu_{\mathcal W}(t)^2}
         {2\left(4\mu^{(e)}_{\mathcal W}(t)+2\epsilon\mu_{\mathcal W}(t)/3\right)}
\right\}.
\]

By the summability assumption \eqref{eq:vertex-summability} and the Borel--Cantelli lemma,
\[
\frac{N_t}{\mu_{\mathcal W}(t)}
\longrightarrow 1
\]
almost surely, conditionally on $\mathcal W$.
\end{proof}

% !TEX root = article.tex

\section{Proof of \cref{thm:sparsity}}

Conditionally on the weights $\mathcal{W}$, our model is closely related to the rank-1 edge-exchangeable multigraph defined by \cite{Janson2018}. As a result, the proof strategy is formally parallel to that of \cite{Janson2018}: edge variables are independent, while vertex indicators are dependent only through shared incident edges.

We prove the following sharper result.

\begin{theorem}
\label{thm:sharp_sparsity}
Assume that, as \(w\downarrow0\), we have $\nu(w)\sim c\,w^{-2}\ell(w^{-1}),$
where \(c>0\), \(\ell\) is slowly varying, and
\[
\ell_1(x):=\int_x^\infty u^{-1}\ell(u)\,\mathrm du<\infty
\]
for all sufficiently large \(x\). Let
\[
S:=\sum_i w_i,
\qquad
L:=\sum_{i<j}w_iw_j.
\]
Then \(0<S<\infty\) and \(0<L<\infty\) a.s., and
\begin{align}
        N_t^{(e)}&\sim tL \qquad\text{a.s.}\label{eq:spar1}\\
        N_t&\sim cS\,t\ell_1(t) \qquad\text{a.s.} \label{eq:spar2}
\end{align}
Consequently, if \(\phi(x):=x\ell_1(x)\), then
\begin{equation}
N_t^{(e)}
=
\Theta\!\left(\phi^{-1}(N_t)\right)
\qquad\text{a.s.}\label{eq:spar3}
\end{equation}
If moreover $\ell_1(x\ell_1(x))\sim \ell_1(x),$
then
\begin{equation}
N_t^{(e)}
=
\Theta\!\left(\frac{N_t}{\ell_1(N_t)}\right)
\qquad\text{a.s.}\label{eq:spar4}
\end{equation}
In particular, if $\ell(x)=(\log x)^a$\ with $a<-1$, then
\begin{equation}
N_t^{(e)}
=
\Theta\!\left(N_t(\log N_t)^{-a-1}\right)
\qquad\text{a.s.}\label{eq:spar5}
\end{equation}
\end{theorem}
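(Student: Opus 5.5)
The proof conditions on $\mathcal W$ throughout and reduces everything to \cref{lem:xvsex}. First I establish $0<S<\infty$ and $0<L<\infty$ a.s. Finiteness of $S$ follows from $\int_0^1 w\,\nu(\mathrm dw)<\infty$; here $\int_0 w\cdot w^{-2}\ell(1/w)\,\mathrm dw=\ell_1(\cdot)<\infty$ after the substitution $u=1/w$. Positivity holds because $\nu([0,1])=\infty$ gives infinitely many atoms. Then $L\le S^2/2$, and $L>0$ since there are at least two atoms. Since $\mu^{(e)}_{\mathcal W}(t)=tL$, the edge part of \cref{lem:xvsex} gives \eqref{eq:spar1} directly; that part of the lemma needs no extra hypothesis.

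The main work is the asymptotics of $\mu_{\mathcal W}(t)=\sum_i[1-\prod_{j\ne i}(1-w_iw_j)^t]$. I compare it to $\tilde\mu(t):=\sum_i(1-e^{-tw_iS})$. For small $w_i$ we have $\prod_{j\neq i}(1-w_iw_j)^t=\exp\{-tw_i(S-w_i)+O(tw_i^2\sum_j w_j^2)\}$. The contribution of the finitely many large weights to $\mu_{\mathcal W}$ is bounded, so it is negligible against $t\ell_1(t)\to\infty$; note $t\ell_1(t)\to\infty$ because $\ell_1$ is slowly varying. The remaining error from small weights can be controlled using $|e^{-a}-e^{-b}|\le|a-b|\wedge 1$ and shown to be $o(t\ell_1(t))$.

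Next I need a.s. asymptotics for the Poisson functional $\tilde\mu(t)=\sum_i f_t(w_i)$, where $f_t(w)=1-e^{-tSw}$. This follows the Gnedin--Hansen--Pitman occupancy argument. The mean is $\int_0^1(1-e^{-tx})\nu(x)\,\mathrm dx$ with $x$ scaled by $S$; to decouple $S$, I work on the event $S\in[s,s+\delta]$ or sandwich. Integrating by parts against $\bar\nu(x)\sim c\,x^{-1}\ell_1(1/x)$ gives $\sim c\,t\ell_1(t)$ by Karamata-type estimates for rapidly varying tails: $\int_0^{1/t}tx\,\nu(x)\,\mathrm dx\sim c\,t\ell_1(t)$, which dominates, while the remaining part is $O(t\ell(t))=o(t\ell_1(t))$. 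This last step uses $\ell(x)=o(\ell_1(x))$, which follows from $\ell_1$ being a de Haan-type integral of $\ell$.

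Almost sure convergence of $\sum_i f_t(w_i)$ to its mean follows from the Poisson variance, which is at most the mean, together with Borel--Cantelli along a geometric subsequence $t_k=\lceil\rho^k\rceil$ and monotonicity in $t$. Since $S$ is random, I handle the dependence by approximating: remove the top $K$ atoms, so that $S$ is approximated by the remaining Poisson mass, and use the independence of disjoint regions. The factor $S$ then enters only through $\ell_1(tS)\sim\ell_1(t)$, by slow variation. This yields $\mu_{\mathcal W}(t)\sim cS\,t\ell_1(t)$. I expect this comparison step to be the main technical obstacle. The summability condition \eqref{eq:vertex-summability} then holds, since the exponent is of order $\epsilon^2\mu^2/(tL)\asymp t\ell_1(t)^2$, which grows faster than $\log t$. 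Hence \cref{lem:xvsex} gives \eqref{eq:spar2}.

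For the consequences, set $\phi(x)=x\ell_1(x)$, which is asymptotically increasing and regularly varying of index $1$. Then $N_t\asymp\phi(t)$ gives $t\asymp\phi^{-1}(N_t)$, because the inverse of a regularly varying function with index $1$ preserves $\Theta$. Combined with $N^{(e)}_t\sim tL$, this yields \eqref{eq:spar3}. Under $\ell_1(x\ell_1(x))\sim\ell_1(x)$, de Bruijn conjugate theory gives $\phi^{-1}(y)\sim y/\ell_1(y)$, which is \eqref{eq:spar4}. For $\ell(x)=(\log x)^a$ with $a<-1$, we get $\ell_1(x)=(\log x)^{a+1}/(-a-1)$, which satisfies the condition since $\log(x\ell_1(x))\sim\log x$. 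This gives \eqref{eq:spar5}, and \cref{thm:sparsity} follows.
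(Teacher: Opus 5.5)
Your proposal follows the paper's proof essentially step for step: edge count from the lemma, reduction of $\mu_{\mathcal W}(t)$ to $T_\lambda=\sum_i(1-e^{-\lambda w_i})$ at $\lambda\approx tS$, its mean via Karamata and $\ell=o(\ell_1)$, concentration on a geometric grid plus monotonicity, summability via $t\ell_1(t)^2\gg\log t$, and inversion of $\phi$. Your variations are all valid: Chebyshev instead of the paper's Poisson-type Chernoff bound, an additive rather than multiplicative comparison of $tR_i$ with $tSw_i$, and de Bruijn conjugates instead of the direct computation $\ell_1(N_t)\sim\ell_1(t\ell_1(t))\sim\ell_1(t)$.

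Two remarks. First, the step you call the main obstacle is not one. Prove $T_\lambda\sim c\lambda\ell_1(\lambda)$ for deterministic real $\lambda\to\infty$; your variance/Borel--Cantelli/monotonicity argument applies verbatim, since the summand no longer involves $S$. This gives a single probability-one event on which the asymptotics hold simultaneously for all $\lambda$. You can then substitute the random value $\lambda=tS$ pathwise (the paper substitutes $t(S-\varepsilon)$ and $tS/(1-\varepsilon S)$) and use $\ell_1(tS)\sim\ell_1(t)$. No conditioning on $S\in[s,s+\delta]$, removal of top atoms, or independence of disjoint regions is needed.

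Second, $\bar\nu(x)\sim c\,x^{-1}\ell(1/x)$, not $c\,x^{-1}\ell_1(1/x)$. It is the truncated first moment $\int_0^x w\,\nu(\mathrm dw)$ that behaves like $c\,\ell_1(1/x)$. With the tail as you wrote it, the contribution of $w>1/t$ to the mean would be of order $t\ell_1(t)$ and would not be negligible. Your subsequent estimates, $t\int_0^{1/t}w\,\nu(w)\,\mathrm dw\sim c\,t\ell_1(t)$ with remainder $O(t\ell(t))$, are the correct ones.
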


\begin{proof}
We write \(\mathcal W=\{w_i\}_{i\ge1}\) for the Poisson process of weights.
Since $\int_0^1 w\,\nu(\mathrm dw)<\infty,$ we have almost surely, $ 0<S:=\sum_iw_i<\infty$. Moreover,
\[
0<L:=\sum_{i<j}w_iw_j
=
\frac12\left(S^2-\sum_iw_i^2\right)
<\infty
\qquad\text{a.s.}
\]

\paragraph{Preliminary.} We first evaluate the asymptotic behaviour of the realised Poisson process.
\[
T_\lambda:=\sum_i \left(1-e^{-\lambda w_i}\right),
\qquad \lambda>0.
\]
Let us show that 
\begin{equation} 
        \label{eq:Tlambda_asymptotic} 
        T_\lambda\sim c\lambda\ell_1(\lambda) 
        \qquad\text{a.s.} 
\end{equation}
Let \(m(\lambda):=\mathbb E[T_\lambda]\). By Campbell's theorem,
\[
m(\lambda)
=
\int_0^\infty \left(1-e^{-\lambda w}\right)\nu(\mathrm dw).
\]
For any \(\varepsilon>0\), the integral over \([\varepsilon,\infty)\) is bounded by \(\nu([\varepsilon,\infty))<\infty\). Since \(\lambda\ell_1(\lambda)\to\infty\) as \(\lambda\to\infty\), this bounded term is \(o(\lambda\ell_1(\lambda))\). Thus, it is sufficient to analyse the behaviour near \(0\).

By hypothesis, for any \(\delta>0\), there exists \(\varepsilon>0\) such that for all \(w\in(0,\varepsilon]\):
\[
(1-\delta)cw^{-2}\ell(w^{-1}) \le \nu(w) \le (1+\delta)cw^{-2}\ell(w^{-1}).
\]
Let \(I(\lambda) = \int_0^\varepsilon \left(1-e^{-\lambda w}\right) w^{-2}\ell(w^{-1})\,\mathrm dw\). With the change of variables \(u=w^{-1}\), we obtain
\[
I(\lambda)
=
\int_{1/\varepsilon}^\infty
\left(1-e^{-\lambda/u}\right)\ell(u)\,\mathrm du.
\]
We partition this integral at \(u=\lambda\), assuming that \(\lambda\) is sufficiently large so that \(\lambda>1/\varepsilon\).

For the first part, \(u \in [1/\varepsilon, \lambda]\), we use the bound \(1-e^{-\lambda/u} \le 1\) to obtain
\[
\int_{1/\varepsilon}^{\lambda} \left(1-e^{-\lambda/u}\right)\ell(u)\,\mathrm du
\le
\int_{1/\varepsilon}^{\lambda}\ell(u)\,\mathrm du.
\]

Let us show that this upper bound is \(o(\lambda\ell_1(\lambda))\).First by \cref{thm:RVKaramata1}, 
$
\int_{1/\varepsilon}^{\lambda}\ell(u)\,\mathrm du \sim \lambda\ell(\lambda).
$
 We must now establish that \(\lim_{\lambda\to\infty} \ell(\lambda)/\ell_1(\lambda) = 0\). Recall that \(\ell_1(\lambda) = \int_\lambda^\infty u^{-1}\ell(u)\,\mathrm{d}u\), and by assumption, this integral is finite. According to \cref{prop:bingham159b}, since \(\ell\) is slowly varying and its tail integral \(\ell_1\) converges, \(\ell_1\) is also slowly varying and satisfies:
\[
\lim_{\lambda\to\infty} \frac{\ell_1(\lambda)}{\ell(\lambda)} = \lim_{\lambda\to\infty} \frac{1}{\ell(\lambda)} \int_\lambda^\infty \frac{\ell(u)}{u}\,\mathrm{d}u = \infty.
\]
By inverting this relationship, we conclude that the first integral is bounded by \(o(\lambda\ell_1(\lambda))\).

For the second part, \(u > \lambda\), we apply the inequalities \(x - x^2/2 \le 1-e^{-x} \le x\) for \(x \ge 0\). Setting \(x = \lambda/u\), we get:
\[
\int_{\lambda}^\infty \left(\frac{\lambda}{u} - \frac{\lambda^2}{2u^2}\right)\ell(u)\,\mathrm du \le \int_{\lambda}^\infty \left(1-e^{-\lambda/u}\right)\ell(u)\,\mathrm du \le \int_{\lambda}^\infty \frac{\lambda}{u}\ell(u)\,\mathrm du.
\]
The upper bound evaluates exactly to \(\lambda\ell_1(\lambda)\). For the lower bound, \cref{thm:RVKaramata1}, yields \(\int_{\lambda}^\infty u^{-2}\ell(u)\,\mathrm du \sim \lambda^{-1}\ell(\lambda)\). Therefore, the subtractive error term scales as \(\frac{\lambda^2}{2} O(\lambda^{-1}\ell(\lambda)) = O(\lambda\ell(\lambda))\), which is \(o(\lambda\ell_1(\lambda))\). 

Consequently, \[ I(\lambda)\sim\lambda\ell_1(\lambda), \] and it follows that \[ m(\lambda)\sim c\lambda\ell_1(\lambda). \]

To establish $T_\lambda\sim m(\lambda)$, we derive a specific concentration bound for \(T_\lambda\). Since \(T_\lambda \) is a functional of the Poisson point process \(\mathcal{W}\), its moment generating function is given by Campbell's theorem: for any \(u \in \mathbb{R}\),
\[
\mathbb{E}[\exp(u T_\lambda)] = \exp\left( \int_0^\infty \left(e^{u(1-e^{-\lambda w})} - 1\right) \nu(\mathrm{d}w) \right).
\]
For any \(w \ge 0\), the term \(x = 1 - e^{-\lambda w}\) lies in the interval \([0, 1]\). By the convexity of the function \(x \mapsto e^{ux}\), we have the bound \(e^{ux} - 1 \le x(e^u - 1)\). Applying this inequality to the integrand yields:
\[
\mathbb{E}[\exp(u T_\lambda)] \le \exp\left( (e^u - 1) \int_0^\infty (1 - e^{-\lambda w}) \nu(\mathrm{d}w) \right) = \exp\left( m(\lambda)(e^u - 1) \right).
\]
So the moment generating function of \(T_\lambda\) is bounded above by that of a Poisson random variable with parameter \(m(\lambda)\). Consequently, Chernoff's inequality yields, for any \(\eta\in(0,1)\), 
\[ 
\mathbb P\!\left( |T_\lambda-m(\lambda)| > \eta m(\lambda) \right) \le 2\exp\!\left( -\frac{\eta^2m(\lambda)}{3} \right). 
\]
Let us define a geometric grid \(\lambda_k=(1+\gamma)^k\) for a fixed \(\gamma>0\). Since \(m(\lambda)\) is regularly varying with index 1, it grows asymptotically faster than \(x^{1-\delta}\) for any \(\delta \in (0,1)\). Thus, \(m(\lambda_k)\) grows exponentially with \(k\), meaning the sequence of probabilities \(\mathbb{P}\left(|T_{\lambda_k} - m(\lambda_k)| > \eta m(\lambda_k)\right)\) decays exponentially and is therefore summable. By the Borel-Cantelli lemma, \(T_{\lambda_k} \sim m(\lambda_k)\) almost surely as \(k \to \infty\).

To extend this convergence to the continuous parameter \(\lambda\), we rely on the monotonicity of \(T_\lambda\). For any \(\lambda > 0\), there exists an index \(k\) such that \(\lambda_k \le \lambda \le \lambda_{k+1}\). As \(T_\lambda\) is nondecreasing and $m$ is strictly increasing in $\lambda$, we have 
\[
\frac{T_{\lambda_k}}{m(\lambda_{k+1})} \le \frac{T_\lambda}{m(\lambda)} \le \frac{T_{\lambda_{k+1}}}{m(\lambda_k)}.
\]
Taking the limit as \(\lambda \to \infty\) (and therefore \(k \to \infty\)), and utilizing the regular variation of \(m\) which guarantees \(\lim_{k\to\infty} m(\lambda_{k+1})/m(\lambda_k) = 1+\gamma\), we obtain almost surely:
\[
\frac{1}{1+\gamma} \le \liminf_{\lambda \to \infty} \frac{T_\lambda}{m(\lambda)} \le \limsup_{\lambda \to \infty} \frac{T_\lambda}{m(\lambda)} \le 1+\gamma.
\]
Since this holds for any arbitrarily small \(\gamma > 0\), taking the intersection of these almost sure events over a countable sequence \(\gamma \downarrow 0\) proves that \(T_\lambda \sim m(\lambda)\) a.s. This establishes \eqref{eq:Tlambda_asymptotic}.

\paragraph{Edge count.} By the first part of \cref{lem:xvsex}, we have established that, almost surely, 
\[ 
N_t^{(e)} \sim \bbE(N_t^{(e)}\mid\mathcal W) = tL. 
\] This is exactly \cref{eq:spar1}.

\paragraph{Vertex count.} We now study the number of active vertices.
Conditionally on \(\mathcal W\), the probability that vertex \(i\) is active after \(t\) interactions is
\[
p_i^{(t)}
=
1-\prod_{j\neq i}(1-w_iw_j)^t.
\]
Hence
$
\mu_{\mathcal W}(t)
:=
\mathbb E[N_t\mid \mathcal W]
=
\sum_i p_i^{(t)}.
$
We shall show that
\begin{equation}
\label{eq:conditional_mean_vertices}
\mu_{\mathcal W}(t)
\sim cS\,t\ell_1(t)
\qquad\text{a.s.}
\end{equation}

Fix \(\varepsilon>0\).
The number of atoms with \(w_i>\varepsilon\) is finite almost surely, and their total contribution to \(\mu_{\mathcal W}(t)\) is \(O(1)\), which is negligible compared to \(t\ell_1(t)\).
Thus, it is enough to consider atoms with \(w_i\le \varepsilon\). For such atoms, write
\[
R_i:=
-\sum_{j\neq i}\log(1-w_iw_j).
\]
Then $
p_i^{(t)}
=
1-e^{-tR_i}.
$ To bound \( R_i \), we use the inequalities \( x \le -\log(1-x) \le \frac{x}{1-x} \), valid for \( x \in [0, 1/2) \). Applying the lower bound with \( x = w_iw_j \) yields:
\[
R_i \ge \sum_{j \neq i} w_iw_j = w_i \sum_{j \neq i} w_j = w_i(S - w_i).
\]
For the upper bound, recall that we are only considering atoms where \( w_i \le \varepsilon \). Because the total sum of the weights is \( S \), every individual weight satisfies \( w_j \le S \). This implies that for any pair, the product is bounded by \( w_iw_j \le \varepsilon S \). Since \(S<\infty\) almost surely, we can choose \(\varepsilon\) sufficiently small such that \(\varepsilon S \le 1/2\). Applying the second inequality to the sum gives:
$$
R_i \le \sum_{j \neq i} \frac{w_iw_j}{1 - w_iw_j} \le \sum_{j \neq i} \frac{w_iw_j}{1 - \varepsilon S}\le \frac{1}{1 - \varepsilon S} \sum_{j \neq i} w_iw_j = \frac{1}{1 - \varepsilon S} w_i(S - w_i).
$$

In total 
\[
w_i(S-w_i)
\le
R_i
\le
\frac{1}{1-\varepsilon S}w_i(S-w_i),
\]
Since \(w_i\le \varepsilon\), it follows that \(S-w_i\ge S-\varepsilon\). Thus, 
\[
 R_i\ge (S-\varepsilon)w_i. 
 \]
We also have \(S-w_i\le S\), and since \(\varepsilon\downarrow0\), \[ \frac{1}{1-\varepsilon S} = 1+o_\varepsilon(1). \] Hence, 
\[
 R_i \le (1+o_\varepsilon(1))Sw_i. 
\]
As the function \( x \mapsto 1 - e^{-tx} \) is strictly monotonically increasing for \( t > 0 \), we have
\[
1 - e^{-t(S-\varepsilon)w_i} \le p_i^{(t)} \le 1 - e^{-t(1+o_\varepsilon(1))S w_i}.
\]
Summing these inequalities over all atoms satisfying \(w_i\le\varepsilon\), we obtain
\[
\sum_{w_i \le \varepsilon} \left(1 - e^{-t(S-\varepsilon)w_i}\right) \le \sum_{w_i \le \varepsilon} p_i^{(t)} \le \sum_{w_i \le \varepsilon} \left(1 - e^{-t(1+o_\varepsilon(1))S w_i}\right).
\]
Recall that \(\{w_i>\varepsilon\}\) is almost surely finite. Consequently, 
\begin{align*} 
        \sum_{w_i\le\varepsilon}p_i^{(t)} &= \mu_{\mathcal W}(t)+O(1),\\ 
        \sum_{w_i\le\varepsilon}(1-e^{-\lambda w_i}) &= T_\lambda+O(1), 
\end{align*} 
for every \(\lambda>0\) as $1-e^{-\lambda w_i}\leq 1$. By substituting \( \lambda = t(S-\varepsilon) \) into the left-hand side, \( \lambda = t(1+o_\varepsilon(1))S \) into the right-hand side, and incorporating the \( O(1) \) terms, we obtain the final bounded inequalities:
\[
T_{t(S-\varepsilon)} + O(1) \le \mu_{\mathcal W}(t) \le T_{t(1+o_\varepsilon(1))S} + O(1).
\]
Using \eqref{eq:Tlambda_asymptotic} and the slow variation of \(\ell_1\), we obtain 
\[ 
\liminf_{t\to\infty} \frac{\mu_{\mathcal W}(t)} {t\ell_1(t)} \ge c(S-\varepsilon), 
\] and 
\[ 
\limsup_{t\to\infty} \frac{\mu_{\mathcal W}(t)} {t\ell_1(t)} \le c(1+o_\varepsilon(1))S. 
\]
Letting \(\varepsilon\downarrow0\) proves
\eqref{eq:conditional_mean_vertices}.

Now, we apply \cref{lem:xvsex} conditionally on \(\mathcal W\). 
Recall that the summability condition requires \(\sum_{t=1}^\infty \exp\left\{ - \frac{\epsilon^2\mu(t)^2} {4(2\mu^{(e)}(t)+\epsilon/3\times\mu(t))} \right\} < \infty\). We know that \(\mu_{\mathcal W}(t) \sim cS\,t\ell_1(t)\) and \(\mu_{\mathcal W}^{(e)}(t) \sim tL\). Because \(\ell_1(t) \to 0\), \(\mu(t)\) grows strictly slower than \(\mu^{(e)}(t)\), meaning the denominator of the exponent is dominated by \(tL\). Consequently, the exponent scales asymptotically as \(-\Theta(t\ell_1(t)^2)\). 
To lower bound this growth, we apply \cref{lem:log} to the slowly varying function \(\ell_1\), which states that \(\lim_{n\to\infty} \frac{\ln(\ell_1(t))}{\ln(t)} = 0\). This means that for any \(\delta\in(0,1/2)\), and for all sufficiently large \(t\), \[ t\ell_1(t)^2 > t^{1-2\delta}. \] Thus the summability condition holds.
Therefore
\[
N_t\sim \mu_{\mathcal W}(t)
\sim cS\,t\ell_1(t)
\qquad\text{a.s.}
\]

\paragraph{Sparsity results.} We now prove \cref{eq:spar3}. Let
$
\phi(x):=x\ell_1(x).
$
Since \(\ell_1\) is slowly varying, \(\phi\) is regularly varying with index \(1\). According to \cref{thm:asymptotic_inverse}, any regularly varying function with index \(\alpha > 0\) possesses an asymptotic inverse that is regularly varying with index \(1/\alpha\). Therefore, \(\phi\) has an asymptotic inverse \(\phi^{-1}\) which is regularly varying with index \(1\), satisfying \(\phi^{-1}(\phi(x)) \sim x\) as \(x \to \infty\). We have proven 
\[
N_t\sim cS\,\phi(t)
\qquad\text{a.s.}
\]
Because \(\phi^{-1}\) is regularly varying with index \(1\), it preserves asymptotic equivalence and satisfies \(\phi^{-1}(ax) \sim a\phi^{-1}(x)\) for any constant \(a > 0\). Applying \(\phi^{-1}\) to both sides yields:
\[
\phi^{-1}(N_t) \sim \phi^{-1}(cS\phi(t)) \sim cS\phi^{-1}(\phi(t)) \sim cSt
\qquad\text{a.s.}
\]
Since \(c\) and \(S\) are strictly positive and finite almost surely, we can rearrange this asymptotic equivalence to isolate \(n\):
\[
t \sim \frac{1}{cS}\phi^{-1}(N_t)
\qquad\text{a.s.}
\]
This implies the exact asymptotic bound:
\[
t=\Theta\!\left(\phi^{-1}(N_t)\right)
\qquad\text{a.s.}
\]

Since \(N_t^{(e)}\sim tL\), it follows that
\[
N_t^{(e)}
=
\Theta\!\left(\phi^{-1}(N_t)\right)
\qquad\text{a.s.}
\]

We now prove \cref{eq:spar4}: assume that
$
\ell_1(x\ell_1(x))\sim \ell_1(x).
$
Since
$
N_t\sim cS\,t\ell_1(t),
$
slow variation gives
\[
\ell_1(N_t)\sim \ell_1(t\ell_1(t))\sim \ell_1(t).
\]
Therefore
\[
\frac{N_t}{\ell_1(N_t)}
\sim
\frac{cS\,t\ell_1(t)}{\ell_1(t)}
=
cS\,t.
\]
Since \(N_t^{(e)}\sim tL\), we conclude that
\[
N_t^{(e)}
=
\Theta\!\left(\frac{N_t}{\ell_1(N_t)}\right)
\qquad\text{a.s.}
\]

Finally, let us prove \cref{eq:spar5}. If \[ \ell(x) = (\log x)^a, \qquad a<-1, \] then
\[
\ell_1(x)
=
\int_x^\infty u^{-1}(\log u)^a\,\mathrm du
=
\frac{-1}{a+1}(\log x)^{a+1}.
\]
Moreover,
$
\ell_1(x\ell_1(x))\sim \ell_1(x).
$
Hence
\[
N_t^{(e)}
=
\Theta\!\left(\frac{N_t}{(\log N_t)^{a+1}}\right)
=
\Theta\!\left(N_t(\log N_t)^{-a-1}\right),
\]
\end{proof}
% !TEX root = article.tex

\section{Proof of \cref{thm:AsymptoticsLevyLaplace}}

When $\eta=1$, consider
\begin{align*}
(\alpha-\tau)x^{\alpha+1}\nu(x)  & =(1-x)^{\xi-1}\int_{\tau}^{\alpha}\frac{s}{\Gamma
(1-s)}x^{\alpha-s}\mathrm{d}s\\
& =(1-x)^{\xi-1}\int_{0}^{\alpha-\tau}\frac{\alpha-u}{\Gamma(1+u-\alpha)}e^{-u\ln(1/x)}\mathrm{d}u
\end{align*}
using the change of variables $u=\alpha-s$. Let $g(z)=\int_{0}^{\alpha-\tau}\frac{\alpha-u}{\Gamma(1+u-\alpha)}e^{-u z}\mathrm{d}u$. We have, as $u\to 0$,
\begin{align*}
\frac{\alpha-u}{\Gamma(1+u-\alpha)}\sim\left \{\begin{array}{ll}
                                     u & \text{if }\alpha=1, \\
                                     \frac{\alpha}{\Gamma(1-\alpha)} & \text{if }\alpha<1 .
                                   \end{array}\right .
\end{align*}
Using the Tauberian \cref{thm:RVTauberian}, we obtain, as $z\to\infty$
$$
g(z)\sim\left \{\begin{array}{ll}
                                     z^{-2} & \text{if }\alpha=1, \\
                                     \frac{z^{-1}\alpha}{\Gamma(1-\alpha)} & \text{if }\alpha<1.
                                   \end{array}\right .
$$
It follows that, as $x\to 0$,
$$
(\alpha-\tau)x^{\alpha+1}\nu(x)=g(\ln 1/x)\sim\left \{\begin{array}{ll}
                                     \ln^{-2}(1/x) & \text{if }\alpha=1, \\
                                     \frac{\ln^{-1}(1/x) \alpha}{\Gamma(1-\alpha)} & \text{if }\alpha<1.
                                   \end{array}\right .
$$

The result when $\eta>0$ follows immediately as $\eta$ is a multiplicative constant.

\section{Conditions}

We now prove that the RapidBeta measure satisfies the assumption $\nu([0,1])=\infty$ and $\int_0^1w\nu(\mathrm{d}w)<\infty$.

\paragraph{Divergence of $\int_0^1 \nu(w)\mathrm{d}w$}

The integral is:
\[
\int_0^1 \nu(w)\mathrm{d}w = \int_0^1 \left( \frac{\eta}{\alpha-\tau} \int_{\tau}^{\alpha} \frac{s}{\Gamma(1-s)} w^{-1-s} (1-w)^{\xi-1} ds \right) \mathrm{d}w
\]
By Fubini's theorem, we can swap the order of integration:
\[
\frac{\eta}{\alpha-\tau} \int_{\tau}^{\alpha} \frac{s}{\Gamma(1-s)} \left( \int_0^1 w^{-1-s} (1-w)^{\xi-1} \mathrm{d}w \right) ds
\]
The inner integral over $w$ is the Beta function, $B(x,y) = \int_0^1 t^{x-1}(1-t)^{y-1}\mathrm{d}t$. We identify the parameters $x$ by setting $x-1 = -1-s$, which gives $x=-s$.

The Beta integral converges only if its parameters are positive. This requires $x = -s > 0$, which implies $s < 0$.

However, the given constraint is $0 \le \tau < \alpha \le 1$. For any $s \in [\tau, \alpha]$, we have $s \ge 0$. This violates the necessary condition $s < 0$. Because the inner integral
\[
\int_0^1 w^{-1-s} (1-w)^{\xi-1} \mathrm{d}w
\]
diverges for all $s \ge 0$ in the domain, the entire expression for $\int_0^1 \nu(w)\mathrm{d}w$ diverges.

\paragraph{Convergence of $\int_0^1 w\nu(w)\,\mathrm{d}w$.} We want to show that
\(
    \int_0^1 w\nu(w)\,\mathrm{d}w<\infty .
\)
By definition of \(\nu\), we have
\[
\int_0^1 w\nu(w)\,\mathrm{d}w
=
\int_0^1
w\left(
\frac{\eta}{\alpha-\tau}
\int_{\tau}^{\alpha}
\frac{s}{\Gamma(1-s)}
w^{-1-s}(1-w)^{\xi-1}
\,\mathrm{d}s
\right)
\,\mathrm{d}w .
\]
Combining the powers of \(w\) gives
\[
\int_0^1 w\nu(w)\,\mathrm{d}w
=
\frac{\eta}{\alpha-\tau}
\int_0^1
\int_{\tau}^{\alpha}
\frac{s}{\Gamma(1-s)}
w^{-s}(1-w)^{\xi-1}
\,\mathrm{d}s\,\mathrm{d}w .
\]
By Tonelli's theorem,
\[
\int_0^1 w\nu(w)\,\mathrm{d}w
=
\frac{\eta}{\alpha-\tau}
\int_{\tau}^{\alpha}
\frac{s}{\Gamma(1-s)}
\left(
\int_0^1
w^{-s}(1-w)^{\xi-1}
\,\mathrm{d}w
\right)
\,\mathrm{d}s .
\]
For \(s<1\), the inner integral is the Beta integral with parameters
\(1-s\) and \(\xi\). Since \(1-s>0\) and \(\xi>0\), it is finite and satisfies
\[
\int_0^1
w^{-s}(1-w)^{\xi-1}
\,\mathrm{d}w
=
B(1-s,\xi)
=
\frac{\Gamma(1-s)\Gamma(\xi)}
{\Gamma(1-s+\xi)} .
\]
Therefore, for \(s<1\),
\[
\frac{s}{\Gamma(1-s)}
\int_0^1
w^{-s}(1-w)^{\xi-1}
\,\mathrm{d}w
=
\frac{s}{\Gamma(1-s)}
\frac{\Gamma(1-s)\Gamma(\xi)}
{\Gamma(1-s+\xi)}
=
\frac{s\Gamma(\xi)}
{\Gamma(\xi+1-s)} .
\]
Thus,
\[
\int_0^1 w\nu(w)\,\mathrm{d}w
=
\frac{\eta\Gamma(\xi)}{\alpha-\tau}
\int_{\tau}^{\alpha}
\frac{s}
{\Gamma(\xi+1-s)}
\,\mathrm{d}s .
\]
This identity is immediate when \(\alpha<1\). If \(\alpha=1\), the Beta integral itself is singular at the single point \(s=1\), but this point has Lebesgue measure zero. Moreover, 
\(
s\mapsto \frac{s\Gamma(\xi)}{\Gamma(\xi+1-s)}
\)
has a continuous extension to \(s=1\), since
\[
\lim_{s\uparrow 1}
\frac{s\Gamma(\xi)}
{\Gamma(\xi+1-s)}
=
\frac{\Gamma(\xi)}{\Gamma(\xi)}
=
1 .
\]
Hence the endpoint \(s=1\) does not affect the value or finiteness of the integral. Finally, the function
\(
s\mapsto \frac{s}{\Gamma(\xi+1-s)}
\)
is continuous on the closed interval \([\tau,\alpha]\). Indeed, for all
\(s\in[\tau,\alpha]\),
\[
\xi+1-s\geq \xi>0,
\]
and the gamma function is finite, strictly positive, and continuous on
\((0,\infty)\). Consequently,
\[
\int_{\tau}^{\alpha}
\frac{s}{\Gamma(\xi+1-s)}
\,\mathrm{d}s<\infty .
\]
Therefore,
\[
\int_0^1 w\nu(w)\,\mathrm{d}w
=
\frac{\eta\Gamma(\xi)}{\alpha-\tau}
\int_{\tau}^{\alpha}
\frac{s}
{\Gamma(\xi+1-s)}
\,\mathrm{d}s
<\infty .
\]

% !TEX root = article.tex

\section{Sampling algorithm}
\label{sec:weights}

\subsection{Proofs}

\subsubsection{Proof of \cref{thm:correctness}}

On each region, $A_\varepsilon$ and $B$, the proposal intensity dominates the target intensity. By the thinning theorem for Poisson point processes \citep[Section 5.1]{Kingman1993}, the accepted points on each region form independent Poisson point processes with an intensity equal to the target intensity $\lambda(\mathrm{d}s, \mathrm{d}w)$ restricted to that region.

Since $A_\varepsilon$ and $B$ are disjoint, their superposition yields a Poisson point process on $[\tau, \alpha] \times [\varepsilon, 1]$ with intensity $\lambda(\mathrm{d}s, \mathrm{d}w)$. Finally, marginalising out the latent dimension $s$ (which corresponds to the projection property of Poisson processes) yields a Poisson point process on $[\varepsilon, 1]$ with intensity:
\[
\nu(w)\,\mathrm{d}w = \left( \int_{\tau}^{\alpha} \lambda(s, w) \,\mathrm{d}s \right) \mathrm{d}w.
\]
This establishes that $\mathcal{W}_\varepsilon$ is a realisation of the $\varepsilon$-truncated RapidBeta process.

\subsubsection{Proof of \cref{prop:discarded_mass}}

By Campbell's theorem for Poisson point processes,
\[
\mathbb{E}[R_\varepsilon]
=
\int_0^\varepsilon w\,\nu(w)\,\mathrm{d}w.
\]
Using \cref{thm:AsymptoticsLevyLaplace} with $\alpha=1$, there exist
constants $0<c_1<c_2<\infty$ and $\varepsilon_0\in(0,1)$ such that for all
$0<w<\varepsilon_0$,
\[
c_1\,w^{-2}\big(\ln(1/w)\big)^{-2}
\le
\nu(w)
\le
c_2\,w^{-2}\big(\ln(1/w)\big)^{-2}.
\]
Hence, for all sufficiently small $\varepsilon$,
\[
c_1 \int_0^\varepsilon \frac{1}{w\big(\ln(1/w)\big)^2}\,\mathrm{d}w
\le
\mathbb{E}[R_\varepsilon]
\le
c_2 \int_0^\varepsilon \frac{1}{w\big(\ln(1/w)\big)^2}\,\mathrm{d}w.
\]
Now make the change of variables $u=\ln(1/w)$, so that
$\mathrm{d}u = -\mathrm{d}w/w$. Then
\[
\int_0^\varepsilon \frac{1}{w\big(\ln(1/w)\big)^2}\,\mathrm{d}w
=
\int_{\ln(1/\varepsilon)}^\infty u^{-2}\,\mathrm{d}u
=
\frac{1}{\ln(1/\varepsilon)}.
\]
Combining the upper and lower bounds yields
\[
\mathbb{E}[R_\varepsilon]
=
\Theta\!\left(\frac{1}{\ln(1/\varepsilon)}\right),
\]
as claimed.
% !TEX root = article.tex

\section{Background on rapidly varying function}
\label{sec:rapidvary}

Most of the background material in this section originates from the book of \citet{Bingham1987}.

\subsection{Definitions}
\begin{definition}[Slowly varying function]
A function $\ell:(0,\infty)\to(0,\infty)$ is slowly varying at infinity if for all $c>0$,
$$
\frac{\ell(cx)}{\ell(x)}\to 1\text{  as }x\to\infty.
$$
\end{definition}
\noindent Examples of slowly varying functions include $\ln^a$, for $a\in\Real$, and functions converging to a constant $c>0$.

\begin{definition}[Regularly varying function]
A function $\ell:(0,\infty)\to(0,\infty)$ is regularly varying at infinity with exponent $\rho\in\Real$ if $f(x)=x^\rho\ell(x)$ for some slowly varying function $\ell$. We note $R_\rho$ the set of all functions regularly varying at infinity with exponent $\rho$. A function $f$ is regularly varying at
$0$ if $f(1/x)$ is regularly varying at infinity that is, $f(x)=x^{-\rho}\ell(1/x)$ for some $\rho\in\Real$. 
\end{definition}

\subsection{Karamata theorems}

The following propositions and corollaries relate to integrals of regularly varying functions.

\begin{proposition}[Karamata theorem] See \citep[Propositions 1.5.8 and 1.5.10]{Bingham1987}. Let $U(t)=t^\rho\ell(t)$ for some locally bounded slowly varying function $\ell$. Then
\begin{itemize}
\item[(i)] If $\rho>-1$
$$
\int_0^x U(t)dt \sim\frac{1}{1+\rho}x^{\rho +1}\ell(x)\text{  as }x\to\infty.
$$
\item[(ii)] If $\rho<-1$
$$
\int_x^\infty U(t)dt\sim -\frac{1}{1+\rho}x^{\rho +1}\ell(x)\text{  as }x\to\infty.
$$
\end{itemize}
\label{thm:RVKaramata1}
\end{proposition}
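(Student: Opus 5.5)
The statement is the classical Karamata theorem, and I would prove both parts with the same device. Write $\int_0^x U(t)\,dt$ (resp. $\int_x^\infty U(t)\,dt$) after the substitution $t=xs$ as
\[
x^{\rho+1}\ell(x)\int_0^1 s^{\rho}\frac{\ell(xs)}{\ell(x)}\,ds
\quad\text{resp.}\quad
x^{\rho+1}\ell(x)\int_1^\infty s^{\rho}\frac{\ell(xs)}{\ell(x)}\,ds .
\]
By the definition of slow variation, $\ell(xs)/\ell(x)\to 1$ pointwise for every fixed $s>0$. Once the limit can be passed inside the integral, the integrals tend to $\int_0^1 s^\rho\,ds=1/(1+\rho)$ for $\rho>-1$, and to $\int_1^\infty s^\rho\,ds=-1/(1+\rho)$ for $\rho<-1$. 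So the whole proof reduces to justifying this exchange of limit and integral, i.e.\ dominated convergence.

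For the domination I would use the uniform convergence theorem together with Potter's bounds for slowly varying functions (Bingham et al., Theorems 1.2.1 and 1.5.6). These give: for any $\delta>0$ there is $X$ such that
\[
\frac{\ell(y)}{\ell(x)}\le 2\max\{(y/x)^{\delta},(y/x)^{-\delta}\}
\qquad\text{for all } x,y\ge X .
\]

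\textbf{Part (ii).} For $\rho<-1$, choose $\delta<-1-\rho$. On $s\ge1$ with $x\ge X$, the integrand is then bounded by $2s^{\rho+\delta}$, which is integrable on $[1,\infty)$. Dominated convergence finishes the argument directly.

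\textbf{Part (i).} For $\rho>-1$, split $\int_0^x U=\int_0^X U+\int_X^x U$. The first piece is a finite constant, because $\ell$ is locally bounded and $\rho>-1$ makes $t^\rho$ integrable at $0$. Since $x^{\rho+1}\ell(x)\to\infty$ (by the Potter bound, $\ell(x)\ge x^{-\delta}$ eventually), this constant is negligible. In the remaining piece the variable $s$ ranges over $[X/x,1]$. There, choosing $\delta<\rho+1$, the integrand is bounded by $2s^{\rho-\delta}$, which is integrable on $(0,1]$. Dominated convergence again gives the limit $1/(1+\rho)$.

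The main obstacle is exactly the domination step, namely obtaining Potter-type bounds. If I could not simply cite them, I would derive them from the representation theorem $\ell(x)=c(x)\exp\bigl(\int_a^x \varepsilon(u)\,du/u\bigr)$ with $c(x)\to c>0$ and $\varepsilon(u)\to0$. This representation itself rests on the uniform convergence theorem for measurable slowly varying functions, which I would quote from Bingham et al.
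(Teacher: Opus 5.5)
Your proof is correct and is essentially the standard argument: the paper gives no proof of its own and simply cites Bingham et al., whose Propositions 1.5.8 and 1.5.10 are proved by the same substitution $t=xs$, with Potter's bounds supplying the domination for dominated convergence. The one point to flag is that finiteness of $\int_0^X U$ requires reading ``locally bounded'' as bounded on $(0,X]$; Bingham et al.\ avoid this by integrating from $X$ rather than $0$, and this reading is the one under which the paper's version of the statement makes sense.
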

The following corollaries will be useful.

\begin{proposition}
\label{thm:RVKaramata2}
Let $U(x)=x^\alpha \ell(1/x)$ for some locally bounded slowly varying function $\ell$.
\begin{itemize}
\item[(i)] If $\alpha>-1$
$$
\int_0^x U(t)\mathrm{d}t\sim \frac{1}{\alpha+1}x^{1+\alpha}\ell(1/x)\text{  as }x\to 0.
$$
\item[(ii)] If $\alpha<-1$
$$
\int_x^\infty U(t)\mathrm{d}t\sim -\frac{1}{\alpha+1}x^{1+\alpha}\ell(1/x)\text{  as }x\to 0.
$$

\end{itemize}
If $\ell$ is slowly varying and $\alpha>-1$ then
$\int_{0}^{x}t^{\alpha}\ell(1/t)dt$ converges and%
\[
\frac{x^{1+\alpha}\ell(1/x)}{\int_{0}^{x}t^{\alpha}\ell(1/t)dt}\rightarrow
\alpha+1\text{ as }x\rightarrow0.
\]

\end{proposition}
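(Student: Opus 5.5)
The statement is a corollary of the Karamata theorem (\cref{thm:RVKaramata1}), transported from behaviour at infinity to behaviour at $0$. I would derive it by the substitution $t=1/s$, which converts an integral of $U(t)=t^\alpha\ell(1/t)$ near $0$ into an integral of a function regularly varying at infinity.

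\emph{Part (i), $\alpha>-1$.} Set $t=1/s$, so $dt=-s^{-2}ds$ and
\[
\int_0^x t^\alpha\ell(1/t)\,\mathrm dt=\int_{1/x}^\infty s^{-\alpha-2}\ell(s)\,\mathrm ds .
\]
The exponent is $\rho=-\alpha-2<-1$, so \cref{thm:RVKaramata1}(ii) applies with $X=1/x\to\infty$ and gives
\[
\int_0^x t^\alpha\ell(1/t)\,\mathrm dt\sim -\frac{1}{\rho+1}X^{\rho+1}\ell(X)=\frac{1}{\alpha+1}x^{\alpha+1}\ell(1/x).
\]
This integral is finite for small $x$, because Karamata's theorem also asserts convergence when $\rho<-1$. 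The final displayed claim, that $x^{1+\alpha}\ell(1/x)\big/\int_0^x t^\alpha\ell(1/t)\,\mathrm dt\to\alpha+1$, is then just the reciprocal of this equivalence.

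\emph{Part (ii), $\alpha<-1$.} Split $\int_x^\infty=\int_x^{1}+\int_1^\infty$. Under the same substitution,
\[
\int_x^1 t^\alpha\ell(1/t)\,\mathrm dt=\int_1^{1/x}s^{-\alpha-2}\ell(s)\,\mathrm ds,
\]
and now $\rho=-\alpha-2>-1$. By \cref{thm:RVKaramata1}(i) (the integral over $[0,1]$ is a constant, harmless by local boundedness), this is asymptotic to
\[
\frac{1}{\rho+1}X^{\rho+1}\ell(X)=-\frac{1}{\alpha+1}x^{1+\alpha}\ell(1/x).
\]
This quantity tends to infinity, since it is regularly varying at $0$ with index $1+\alpha<0$. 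The tail $\int_1^\infty$ only needs to be finite, and any finite constant is then negligible. If it is not finite, I would restate the result with the upper limit replaced by a fixed constant, which is the form actually used.

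The main obstacle is bookkeeping rather than substance. One must check that local boundedness of $\ell$ survives the substitution, so that Karamata's hypotheses hold on $[1,\infty)$. One must also confirm that the additive constants introduced by changing the lower limit from $0$ to $1$ in (ii) are negligible, which follows because the main term diverges. In (i) no such constant arises.
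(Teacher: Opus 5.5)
Your proposal is correct and follows the paper's own proof: the substitution $t=1/s$ turns each integral near $0$ into one at infinity, Karamata's theorem (\cref{thm:RVKaramata1}) is applied with exponent $\rho=-\alpha-2$ (part (ii) when $\alpha>-1$, part (i) when $\alpha<-1$), and the final claim is the reciprocal of (i). Your bookkeeping is slightly more careful than the paper's. The paper writes the limit as $\rho+1$ where it should be $-(\rho+1)=\alpha+1$, and it silently assumes the finiteness of the $t\to\infty$ tail in (ii), which you flag explicitly.
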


\begin{proof}
Let $\rho<-1$. We have $\int
_{x}^{\infty}t^{\rho}\ell(t)\mathrm{d}t=\int_{0}^{1/x}t^{-\rho-2}\ell(1/t)\mathrm{d}t$. Writing
$\alpha=-\rho-2>-1$, we obtain, using \cref{thm:RVKaramata1}(ii) %
\[
\frac{x^{-\alpha-1}\ell(x)}{\int_{0}^{1/x}t^{\alpha}\ell(1/t)\mathrm{d}t}%
\rightarrow\rho+1\text{ as }x\rightarrow\infty,
\]
or
\[
\frac{x^{1+\alpha}\ell(1/x)}{\int_{0}^{x}t^{\alpha}\ell(1/t)dt}\rightarrow
\rho+1\text{ as }x\rightarrow0.
\]
The case $\alpha<-1$ follows similarly, using \cref{thm:RVKaramata1}(i).
\end{proof}

\begin{proposition}[Proposition 1.5.9b, \citealp{Bingham1987}]
\label{prop:bingham159b}
Let $\ell$ be slowly varying and suppose $\int_x^\infty \ell(t)\mathrm{d}t/t < \infty$. Then $\int_x^\infty \ell(t)\mathrm{d}t/t$ is slowly varying and
\[
\frac{1}{\ell(x)} \int_x^\infty \ell(t)\mathrm{d}t/t \to \infty \quad \text{as } x \to \infty.
\]
\end{proposition}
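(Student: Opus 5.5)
This is Bingham et al.'s Proposition 1.5.9b, and I would prove it directly from the uniform convergence theorem for slowly varying functions. Write $\ell_1(x):=\int_x^\infty \ell(t)\,\mathrm dt/t$, which is finite for all large $x$ by assumption.

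\emph{Step 1: $\ell_1(x)/\ell(x)\to\infty$.}
\begin{itemize}
\item Fix $K>1$ and split off the first stretch of the integral:
\[
\ell_1(x)\ \ge\ \int_x^{Kx}\frac{\ell(t)}{t}\,\mathrm dt\ =\ \int_1^K \frac{\ell(xu)}{u}\,\mathrm du .
\]
\item By the uniform convergence theorem, $\ell(xu)/\ell(x)\to 1$ uniformly for $u\in[1,K]$.
\item Dividing by $\ell(x)$ therefore gives $\liminf_{x\to\infty}\ell_1(x)/\ell(x)\ge \ln K$.
\item Since $K$ is arbitrary, the ratio tends to infinity.
\end{itemize}

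\emph{Step 2: $\ell_1$ is slowly varying.} It suffices to treat $c>1$, because the case $c<1$ follows by inverting the ratio.
\begin{itemize}
\item We have
\[
\ell_1(x)-\ell_1(cx)=\int_x^{cx}\frac{\ell(t)}{t}\,\mathrm dt=\int_1^c \frac{\ell(xu)}{u}\,\mathrm du .
\]
\item Again by uniform convergence, this difference is asymptotic to $\ell(x)\ln c$.
\item By Step 1, $\ell(x)\ln c=o(\ell_1(x))$.
\item Hence $\ell_1(cx)/\ell_1(x)=1-o(1)\to1$, which is slow variation.
\end{itemize}
Positivity and measurability of $\ell_1$ are immediate from its definition.

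\emph{Main obstacle.} The only delicate point is justifying uniform convergence on compact $u$-sets. I would invoke the standard uniform convergence theorem for measurable slowly varying functions from \citet{Bingham1987}. If that is not available, pointwise convergence plus dominated convergence works, provided $\ell$ is locally bounded; the Potter-type bound $\ell(xu)/\ell(x)\le 2$ for $u\in[1,K]$ and large $x$ then supplies the domination.
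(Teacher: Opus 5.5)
Your proof is correct. The paper gives no proof of its own here; it simply cites \citet{Bingham1987}. Your argument is the standard one: the increment $\ell_1(x)-\ell_1(cx)=\int_1^c \ell(xu)\,\mathrm du/u\sim \ell(x)\ln c$ by the uniform convergence theorem, and this is $o(\ell_1(x))$ once Step~1 is in hand.

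Step~1 can equally be done by applying Fatou's lemma to $\ell_1(x)/\ell(x)=\int_1^\infty \frac{\ell(xu)}{\ell(x)}\frac{\mathrm du}{u}$. That route needs only pointwise convergence, and your truncation at $K$ reaches the same conclusion.

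One minor point concerns your fallback. Potter's bound is itself derived from the uniform convergence theorem (or the representation theorem), so it is not an independent route. Local boundedness alone does not give $\ell(xu)/\ell(x)\le 2$ uniformly in $u\in[1,K]$. No fallback is needed, though: the uniform convergence theorem only requires $\ell$ to be measurable, and that is already implicit in the hypothesis that $\int_x^\infty \ell(t)\,\mathrm dt/t$ exists.
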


\subsection{Asymptotic inverse}

\begin{theorem}[Theorem 1.5.12, \citealp{Bingham1987}]
\label{thm:asymptotic_inverse}
Let $f \in R_\alpha$ with $\alpha > 0$. Then there exists $g \in R_{1/\alpha}$ with
\[
f(g(x)) \sim g(f(x)) \sim x \quad \text{as } x \to \infty.
\]
Here $g$ is unique to within asymptotic equivalence, and one version of $g$ is the generalized inverse $f^{-1}(x) = \inf\{y: f(y) > x\}$.
\end{theorem}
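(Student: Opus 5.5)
The proof works with the generalized inverse $g(x)=f^{-1}(x)=\inf\{y: f(y)>x\}$ and relies on two standard tools for $f\in R_\alpha$, $f(x)=x^\alpha\ell(x)$. The first is the \emph{uniform convergence theorem}: $f(\lambda x)/f(x)\to\lambda^\alpha$ uniformly for $\lambda$ in compact subsets of $(0,\infty)$. The second is \emph{Potter's bounds}: for every $A>1$ and $\delta>0$ there is $X$ such that
\[
\frac{f(y)}{f(x)}\le A\max\bigl\{(y/x)^{\alpha+\delta},(y/x)^{\alpha-\delta}\bigr\},\qquad x,y\ge X,
\]
together with the reverse inequality. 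We assume, as in \citet{Bingham1987}, that $f$ is measurable and locally bounded. Since $\alpha>0$ and $\ell(x)=x^{o(1)}$, we have $f(x)\to\infty$. Hence $g(x)<\infty$ for every $x$, and $g(x)\to\infty$ as $x\to\infty$, because on each bounded interval $[0,Y]$ the function $f$ is bounded.

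\textbf{Step 1: $f(g(x))\sim x$.} Write $y=g(x)$ and fix $\delta\in(0,1)$. By definition of the infimum, $f((1-\delta)y)\le x$. Also, there is a point $y'\in[y,(1+\delta)y]$ with $f(y')>x$. By uniform convergence on $\lambda\in[1-\delta,1+\delta]$, and since $y\to\infty$, we get for large $x$
\[
(1-\delta)^\alpha(1-\delta)f(y)\le f((1-\delta)y)\le x< f(y')\le (1+\delta)^\alpha(1+\delta)f(y).
\]
Letting $\delta\downarrow0$ gives $f(g(x))/x\to1$.

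\textbf{Step 2: the key lemma, $f(a_n)\sim f(b_n)$ with $a_n,b_n\to\infty$ implies $a_n\sim b_n$.} This is the step I expect to be the main obstacle, since $f$ need not be monotone. The plan is to use Potter's bounds with $\delta<\alpha$ to show that $r_n:=a_n/b_n$ is bounded away from $0$ and from $\infty$. Indeed, if $r_n\ge1$ then
\[
\frac{f(a_n)}{f(b_n)}\ge A^{-1}r_n^{\alpha-\delta}\to\infty \quad\text{whenever } r_n\to\infty,
\]
and the case $r_n\to0$ is symmetric. Along any subsequence with $r_n\to r\in(0,\infty)$, the uniform convergence theorem gives $f(a_n)/f(b_n)\to r^\alpha$, so $r=1$. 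Since every subsequential limit equals $1$, we conclude $r_n\to1$.

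\textbf{Step 3: the remaining claims.} For regular variation of $g$, fix $\lambda>0$. Step 1 gives
\[
f(g(\lambda x))\sim\lambda x\sim \lambda f(g(x))\sim f\bigl(\lambda^{1/\alpha}g(x)\bigr),
\]
the last equivalence by uniform convergence. The lemma of Step 2 then yields $g(\lambda x)\sim\lambda^{1/\alpha}g(x)$, that is, $g\in R_{1/\alpha}$. For $g(f(x))\sim x$, put $y=g(f(x))$. Step 1 applied at the point $f(x)\to\infty$ gives $f(y)\sim f(x)$, and the lemma gives $y\sim x$. For uniqueness, suppose $h$ also satisfies $f(h(x))\sim x$. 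Then $f(h(x))\sim f(g(x))$, and the lemma gives $h\sim g$. Conversely, any $h\sim g$ satisfies $f(h)\sim f(g)\sim x$ by uniform convergence, so $g$ is determined exactly up to asymptotic equivalence.
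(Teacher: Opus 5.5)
The paper gives no proof of this statement. It is quoted from \citet{Bingham1987} and used only as a black box, for $\phi(x)=x\ell_1(x)$ in the proof of \cref{thm:sharp_sparsity}. Your argument is a correct, self-contained proof.

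Its engine is the cancellation lemma of Step 2: if $f(a_n)\sim f(b_n)$ with $a_n,b_n\to\infty$, then $a_n\sim b_n$. Potter's bounds trap $a_n/b_n$ in a compact subset of $(0,\infty)$, and the uniform convergence theorem forces every subsequential limit to be $1$. This lets you handle the generalized inverse of a non-monotone $f$ directly. One lemma then yields $g\in R_{1/\alpha}$, $g(f(x))\sim x$ and uniqueness. The other usual route first replaces $f$ by an asymptotically equivalent monotone function and inverts that. Your version has the advantage of proving the statement exactly as phrased, with $f^{-1}$ itself as the asymptotic inverse.

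You were right to add local boundedness, which the statement as written omits. It is genuinely needed for the claim that $f^{-1}$ is a version. For example, take $f(x)=x$ except $f(1/2+1/n)=n^2$ for $n\ge1$. This $f$ is measurable and lies in $R_1$, yet $f^{-1}(x)=1/2$ for all large $x$, so $f(f^{-1}(x))\not\sim x$. Likewise, measurability is required for both the uniform convergence theorem and Potter's bounds, and the paper's definition of $R_\rho$ omits it.

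Two small points should be added:
\begin{enumerate}
\item $g$ is nondecreasing, hence measurable, as membership in $R_{1/\alpha}$ requires.
\item In the uniqueness step, first note that $h(x)\to\infty$, which follows from $f(h(x))\sim x$ and local boundedness of $f$, before invoking Step 2.
\end{enumerate}
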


\subsection{Tauberian theorem}

The following theorem is a variation of \citet[Theorem 1.7.6 p.46]{Bingham1987}, where the two limits at 0 and infinity are exchanged. The proof is similar. See also \citet[Chapter XIII]{Feller1971}.

\begin{theorem}[Tauberian theorem]\label{thm:RVTauberian}
Assume $U(x)\geq0$, $c\geq0$, $\rho>-1$, $\widehat{U}(s)=s\int_{0}^{\infty
}e^{-sx}U(x)\mathrm{d}x$ convergent for $s>0$, and $\ell$ a slowly varying function.
Then%
\[
U(x)\sim cx^{\rho}\ell(1/x)/\Gamma(1+\rho)\text{ as }x\rightarrow0
\]
implies%
\[
\widehat{U}(s)\sim cs^{-\rho}\ell(s)\text{ as }s\rightarrow\infty.
\]

\end{theorem}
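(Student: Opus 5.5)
The statement is the Abelian (easy) direction, so I would prove it directly by a change of variables and dominated convergence; no genuine Tauberian argument is needed. Substituting $x=y/s$ in the definition gives
\[
\widehat U(s)=s\int_0^\infty e^{-sx}U(x)\,\mathrm dx=\int_0^\infty e^{-y}\,U(y/s)\,\mathrm dy .
\]
Dividing by $s^{-\rho}\ell(s)$, the quantity to study is
\[
\frac{\widehat U(s)}{s^{-\rho}\ell(s)}=\int_0^\infty e^{-y}\,\frac{s^{\rho}U(y/s)}{\ell(s)}\,\mathrm dy .
\]
For fixed $y>0$ and $s\to\infty$, we have $y/s\to0$, so the hypothesis gives
\[
s^\rho U(y/s)\sim c\,y^{\rho}\,\ell(s/y)/\Gamma(1+\rho).
\]
By slow variation, $\ell(s/y)/\ell(s)\to1$. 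Hence the integrand converges pointwise to $c\,y^\rho e^{-y}/\Gamma(1+\rho)$. Its integral is $c\,\Gamma(1+\rho)/\Gamma(1+\rho)=c$, since $\rho>-1$. It remains to justify exchanging limit and integral.

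Fix $\delta\in(0,1)$ such that $U(x)\le 2c\,x^\rho\ell(1/x)/\Gamma(1+\rho)$ for $0<x\le\delta$. (If $c=0$, replace $2c$ by an arbitrary $\varepsilon>0$; the same argument then gives a $\limsup$ of order $\varepsilon$.) Split the $y$-integral at $y=\delta s$.

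\textbf{Region $y\le \delta s$.} This corresponds to $x\le\delta$. Here I would use Potter's bounds \citep[Theorem 1.5.6]{Bingham1987}: for any $\eta>0$ there are $A,X$ such that
\[
\frac{\ell(u)}{\ell(v)}\le A\max\{(u/v)^{\eta},(u/v)^{-\eta}\}\qquad\text{for } u,v\ge X.
\]
Choose $\eta<1+\rho$. Taking $\delta\le 1/X$ ensures $s/y\ge 1/\delta\ge X$, and taking $s\ge X$ as well, the integrand is dominated by
\[
C\,e^{-y}\,y^{\rho}\max\{y^{\eta},y^{-\eta}\}.
\]
This is integrable on $(0,\infty)$ because $\rho-\eta>-1$. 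Dominated convergence then gives the limit $c$ for this part.

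\textbf{Region $y>\delta s$.} In the original variable this piece equals
\[
\frac{s^{1+\rho}}{\ell(s)}\int_\delta^\infty e^{-sx}U(x)\,\mathrm dx .
\]
Since the Laplace integral converges at some fixed $s_0>0$, for $s>s_0$ we have $e^{-sx}\le e^{-s_0x}e^{-(s-s_0)\delta}$ on $x\ge\delta$. The piece is therefore at most
\[
\frac{s^{1+\rho}}{\ell(s)}\,e^{-(s-s_0)\delta}\int_0^\infty e^{-s_0x}U(x)\,\mathrm dx .
\]
This tends to $0$, since a slowly varying $\ell$ satisfies $1/\ell(s)=o(s^{\eta})$ (\cref{lem:log}-type bound), so the exponential factor wins.

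Combining the two regions yields $\widehat U(s)\sim c\,s^{-\rho}\ell(s)$. The main obstacle is the uniform domination near $y=0$ and for large $y$ in the first region, which requires Potter's bounds rather than mere pointwise slow variation. I would also check the measurability and local boundedness of $\ell$, needed to invoke the uniform convergence theorem underlying Potter's bounds.
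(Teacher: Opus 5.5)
Your argument is correct, but it follows a different route from the paper's proof. The paper passes to the integrated function $V(x)=\int_0^x U(y)\,\mathrm dy$. It uses the Karamata integration result (\cref{thm:RVKaramata2}) to get $V(x)\sim c\,x^{\rho+1}\ell(1/x)/\Gamma(2+\rho)$ as $x\to0$. It then invokes Karamata's Tauberian theorem for the non-decreasing function $V$ (Theorem 1.7.1 of \citet{Bingham1987}, in its version with $x\to0$, $s\to\infty$). It finishes with the identity $\int_0^\infty e^{-sx}\,\mathrm dV(x)=\widehat U(s)/s$.

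In the paper, the hypothesis $\rho>-1$ enters as the requirement that $V$ have positive index $\rho+1$. All the analytic work is hidden inside the cited theorem, and the paper only asserts that its swapped-limit version is proved similarly.

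You instead prove the Abelian direction from scratch. You use the scaling $x=y/s$, then Potter's bounds to dominate the bulk uniformly in $s$; here $\rho>-1$ appears as integrability of $y^{\rho-\eta}e^{-y}$ at the origin. Finally, convergence of the Laplace integral at a single $s_0$ kills the region $x>\delta$ exponentially.

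What each route buys:
\begin{itemize}
\item Your version is self-contained. It makes clear that only the easy implication is needed, and exactly which hypotheses it uses: measurability of $\ell$ for Potter's bounds (local boundedness is not needed) and finiteness of the transform at one point. It also treats the degenerate case $c=0$ explicitly.
\item The paper's version is shorter. It also sits inside a framework that gives the converse implication for $V$, although a converse for $U$ itself would need an extra monotonicity assumption in either approach.
\end{itemize}
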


\begin{proof}
Write $V(x)=\int_{0}^{x}U(y)\mathrm{d}y$ (this is finite for any $x$ as $\widehat
{U}(s)$ is convergent for any $s$), then $V$ is non-decreasing and by
\cref{thm:RVKaramata2}
\[
V(x)\sim\frac{c}{\rho+1}x^{\rho+1}\ell(1/x)/\Gamma(1+\rho)\text{ as
}x\rightarrow0.
\]
Then by \citet[Theorem 1.7.1, p.38]{Bingham1987}, this is equivalent to
\[
\widehat{V}(s)=\int_{0}^{\infty}e^{-sx}\mathrm{d}V(x)\sim cs^{-\rho-1}\ell(s)\text{ as
}s\rightarrow\infty.
\]
Finally, note that $\widehat{V}(s)=\frac{\widehat{U}(s)}{s}$. Thus the above equation is
equivalent to%
\[
\widehat{U}(s)\sim cs^{-\rho}\ell(s)\text{ as }s\rightarrow\infty.
\]
\end{proof}

\subsection{Other useful results on regular variation}

\begin{lemma}[Lemma 14, \citealp{Gnedin2007}]
    \label{gnedin14}
For $\ell$ slowly varying, the relation 

$$\int_x^\infty \nu(du) \sim x^{-1}\ell(x^{-1}), ~~x\rightarrow0$$

implies 

$$\int_0^x u\nu(du) \sim \ell_1(x^{-1}), ~~x\rightarrow0$$

with $\ell_1(x)=o(\ell(x))$ is another slowly varying function defined by $\ell_1(y)=\int_y^\infty u^{-1}\ell(u)du$.
\end{lemma}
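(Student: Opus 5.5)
We must prove Lemma 14 of Gnedin et al.: if $\bar\nu(x):=\int_x^\infty \nu(\mathrm du)\sim x^{-1}\ell(x^{-1})$ as $x\to 0$, then $\int_0^x u\,\nu(\mathrm du)\sim \ell_1(x^{-1})$, and $\ell_1=o(\ell)$ is slowly varying.

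The plan is to convert the truncated first moment into an integral of the tail function by Fubini, and then apply the Karamata-type results of this section. For $u\in(0,x]$ write $u=\int_0^u \mathrm dy$ and swap the order of integration:
\[
\int_0^x u\,\nu(\mathrm du)=\int_0^x \nu\bigl((y,x]\bigr)\,\mathrm dy=\int_0^x \bar\nu(y)\,\mathrm dy - x\,\bar\nu(x).
\]
Implicitly, $\int_0^x u\,\nu(\mathrm du)<\infty$ is part of the conclusion. This finiteness will follow from the integrability of $\bar\nu$ near $0$, which is exactly the requirement that $\ell_1$ be finite. So I will assume $\int^\infty u^{-1}\ell(u)\,\mathrm du<\infty$, as in the surrounding context; otherwise the statement is vacuous or infinite.

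Next I evaluate each term. For the boundary term, $x\bar\nu(x)\sim \ell(x^{-1})$ directly from the hypothesis. For the integral, substitute $y=1/v$:
\[
\int_0^x \bar\nu(y)\,\mathrm dy=\int_{1/x}^\infty \bar\nu(1/v)\,v^{-2}\,\mathrm dv .
\]
Here $\bar\nu(1/v)v^{-2}\sim v^{-1}\ell(v)$ as $v\to\infty$. Since the integrand is asymptotically equivalent to $v^{-1}\ell(v)$ and both are integrable at infinity, a standard $\epsilon$-argument shows that the tail integrals are asymptotically equivalent: for large $X$, the ratio of integrands lies in $[1-\epsilon,1+\epsilon]$ on $[X,\infty)$. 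This gives $\int_0^x\bar\nu(y)\,\mathrm dy\sim \ell_1(1/x)$.

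It remains to show that the boundary term is negligible, i.e. $\ell(x^{-1})=o(\ell_1(x^{-1}))$. This is precisely \cref{prop:bingham159b}, which also gives that $\ell_1$ is slowly varying and that $\ell_1/\ell\to\infty$, i.e. $\ell_1=o(\ell)$ in the sense stated. Combining,
\[
\int_0^x u\,\nu(\mathrm du)=\ell_1(x^{-1})\bigl(1+o(1)\bigr)-\ell(x^{-1})\sim \ell_1(x^{-1}).
\]

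The main obstacle is bookkeeping rather than depth. First, the Fubini swap needs care: it is legitimate because everything is nonnegative (Tonelli). Second, the transfer of asymptotic equivalence from the integrands to the tail integrals must be stated with an explicit $\epsilon$-argument, since Karamata's theorem at index $-1$ is not directly applicable. Finally, the phrase "$\ell_1=o(\ell)$" should be read as "$\ell=o(\ell_1)$", which is what Proposition 1.5.9b of Bingham et al. actually gives.
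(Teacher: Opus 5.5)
Your proof is correct and complete. The paper does not prove this lemma; it quotes it from Gnedin, Hansen and Pitman (2007), so there is no in-paper argument to compare against.

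Your three steps are the standard derivation. The first is the Tonelli identity $\int_0^x u\,\nu(\mathrm du)=\int_0^x\bar\nu(y)\,\mathrm dy-x\bar\nu(x)$. The second is an explicit $\epsilon$-transfer of asymptotic equivalence to the tail integral at the critical index $-1$, where Karamata's theorem indeed does not apply. The third uses \cref{prop:bingham159b} to make the boundary term negligible. This is the same mechanism the paper itself uses in the proof of \cref{thm:sharp_sparsity}, where contributions of order $\lambda\ell(\lambda)$ are discarded against $\lambda\ell_1(\lambda)$.

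Your reading of the printed relation $\ell_1=o(\ell)$ as $\ell=o(\ell_1)$ is the right one. The statement as typeset is a typo, and the paper's own proof of \cref{thm:sharp_sparsity} uses $\ell_1(\lambda)/\ell(\lambda)\to\infty$.

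Two cosmetic points:
\begin{itemize}
\item In your final display the boundary term should read $-x\bar\nu(x)=-\ell(x^{-1})(1+o(1))$ rather than $-\ell(x^{-1})$. This is harmless, since it is $o(\ell_1(x^{-1}))$ either way.
\item The finiteness of $\ell_1$ need not be an extra assumption in this paper's setting. The standing hypothesis $\int_0^1 w\,\nu(\mathrm dw)<\infty$, combined with your own identity, forces $\int_0^x\bar\nu(y)\,\mathrm dy<\infty$ for small $x$. Hence $\int^\infty u^{-1}\ell(u)\,\mathrm du<\infty$.
\end{itemize}
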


\begin{lemma}[Proposition 1.3.6, \citealp{Bingham1987}]
\label{lem:log}
If $\ell$ varies slowly then $\frac{\ln(\ell(x))}{\ln(x)} \underset{x\rightarrow\infty}{\longrightarrow} 0$.
\end{lemma}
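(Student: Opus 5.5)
The plan is to reduce the statement to the additive function $h(x):=\ln \ell(x)$. Slow variation of $\ell$ means exactly that $h(\lambda x)-h(x)\to 0$ as $x\to\infty$ for every fixed $\lambda>0$. The claim is that $h$ grows sub-logarithmically, and the natural route is to telescope along a dyadic chain $x, x/2, x/4,\dots$ until we land in a fixed compact interval. As in \citet{Bingham1987}, I will assume $\ell$ is measurable, which is standing there.

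\textbf{Step 1 (uniform control).} Apply the Uniform Convergence Theorem for slowly varying functions \citep[Theorem 1.2.1]{Bingham1987}: $\ell(\lambda x)/\ell(x)\to 1$ uniformly in $\lambda$ on compact subsets of $(0,\infty)$. Fix $\delta>0$. There is then $X>0$ such that
\[
\sup_{\lambda\in[1,2]}\bigl|h(\lambda x)-h(x)\bigr|\le \delta \qquad\text{for all } x\ge X .
\]
This step is the only genuinely nontrivial ingredient. Pointwise convergence for each fixed $\lambda$ would not suffice, because the telescoping below uses ratios $\lambda$ that vary with $x$.

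\textbf{Step 2 (boundedness on a block).} Taking $x=X$ in Step 1 gives
\[
|h(y)|\le |h(X)|+\delta=:C \qquad\text{for all } y\in[X,2X].
\]
So $h$ is bounded on the block $[X,2X]$.

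\textbf{Step 3 (telescoping).} For $x\ge X$, write $x=2^n y$ with $y\in[X,2X)$ and $n=\lfloor \log_2(x/X)\rfloor$. Each of the $n$ doubling steps from $2^k y$ to $2^{k+1}y$ starts at a point $\ge X$, so Step 1 applies with $\lambda=2$ and gives
\[
|h(x)|\le |h(y)|+n\delta\le C+\delta\log_2(x/X).
\]
Dividing by $\ln x$ and letting $x\to\infty$ yields
\[
\limsup_{x\to\infty}\frac{|\ln \ell(x)|}{\ln x}\le \frac{\delta}{\ln 2}.
\]
Since $\delta>0$ is arbitrary, the limit is $0$.

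The main obstacle is Step 1. Everything else is elementary, and this is precisely why measurability of $\ell$ is needed. If one prefers, Steps 1 and 2 can be replaced by the Representation Theorem \citep[Theorem 1.3.1]{Bingham1987}:
\[
\ell(x)=c(x)\exp\Bigl(\int_a^x \varepsilon(u)\,u^{-1}\,\mathrm du\Bigr),\qquad c(x)\to c>0,\quad \varepsilon(u)\to 0 .
\]
Then $\ln\ell(x)=\ln c(x)+\int_a^x\varepsilon(u)u^{-1}\,\mathrm du$. Splitting the integral at a large point where $|\varepsilon|<\delta$ shows directly that it is $O(1)+\delta\ln x$, giving the same conclusion.
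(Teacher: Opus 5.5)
Your proof is correct. The paper gives no argument of its own: it imports the statement from Bingham, Goldie and Teugels, where Proposition 1.3.6(i) is read off the Representation Theorem,
$\ln\ell(x)/\ln x=\ln c(x)/\ln x+(\ln x)^{-1}\int_a^x\varepsilon(u)u^{-1}\,\mathrm du\to0$.
That is exactly your closing alternative.

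Your main route is different and more economical. Rather than constructing $c(\cdot)$ and $\varepsilon(\cdot)$, you use the Uniform Convergence Theorem only to bound $h=\ln\ell$ on one block $[X,2X]$. You then telescope with the fixed ratio $2$ and get $|h(x)|\le C+\delta\log_2(x/X)$. This isolates what is really needed: eventual local boundedness of $h$, plus the pointwise condition at a single ratio. The representation route is a one-liner once Theorem 1.3.1 is available, though that theorem is itself proved from the UCT. It also yields the other parts of Proposition 1.3.6 at the same time.

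One small correction to your commentary. Step 3 uses only $\lambda=2$, so uniformity enters solely in Step 2, to control $h$ at the $x$-dependent base point $y=x/2^n\in[X,2X)$. The telescoping itself does not use varying ratios.

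In fact, the paper only applies the lemma to $\ell_1(x)=\int_x^\infty u^{-1}\ell(u)\,\mathrm du$, which is nonincreasing. There $\ell_1(2X)\le\ell_1(y)\le\ell_1(X)$ on $[X,2X]$ gives Step 2 for free, so your telescoping argument works without the UCT at all.
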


\section{More background results}

\begin{theorem}
\label{thm:chernoff}
Let $X_1,X_2,\ldots$ be a sequence of independent Bernoulli random variables with parameters $p_1,p_2,\ldots$ such that $\mu=\sum_k p_k <\infty$. Let
\[
S=\sum_k X_k.
\]
Then, for every $\epsilon\in(0,1)$,
\[
\bbP(|S-\mu|>\epsilon\mu)\leq 2\exp\left(-\mu\frac{\epsilon^2}{3}\right).
\]
\end{theorem}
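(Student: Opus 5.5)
The statement is the multiplicative Chernoff bound for a countable sum of independent Bernoulli variables. I will reduce it to the finite case through a moment generating function argument and then pass to the limit. Since $\mu=\sum_k p_k<\infty$, the monotone limit $S=\sum_k X_k$ has $\bbE S=\mu<\infty$, so $S<\infty$ almost surely. For $\theta\in\mathbb R$, independence and monotone convergence for $\theta\ge 0$ give
\[
\bbE e^{\theta S}=\prod_k\left(1+p_k(e^\theta-1)\right)\le \exp\left(\mu(e^\theta-1)\right),
\]
using $1+x\le e^x$. For $\theta<0$ the same holds by dominated convergence, since $e^{\theta S}\le 1$. Thus $S$ is dominated in moment generating function by a Poisson variable with mean $\mu$. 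This is exactly the same domination already used for $T_\lambda$ in the proof of \cref{thm:sharp_sparsity}.

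\textbf{Upper tail.} By Markov's inequality with $\theta=\ln(1+\epsilon)>0$,
\[
\bbP(S\ge (1+\epsilon)\mu)\le \exp\left(-\mu\left[(1+\epsilon)\ln(1+\epsilon)-\epsilon\right]\right).
\]
I then use the elementary inequality $(1+\epsilon)\ln(1+\epsilon)-\epsilon\ge \epsilon^2/3$ for $\epsilon\in(0,1)$. To check it, set $f(\epsilon)=(1+\epsilon)\ln(1+\epsilon)-\epsilon-\epsilon^2/3$. Then $f(0)=f'(0)=0$ and $f''(\epsilon)=1/(1+\epsilon)-2/3$. So $f''$ is nonnegative on $[0,1/2]$, which makes $f'\ge0$ there. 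On $[1/2,1]$ the function $f'(\epsilon)=\ln(1+\epsilon)-2\epsilon/3$ is concave, nonnegative at $1/2$, and equal to $\ln 2-2/3>0$ at $1$, so it stays nonnegative. Hence $f'\ge 0$ on $[0,1]$ and $f\ge0$.

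\textbf{Lower tail.} With $\theta=\ln(1-\epsilon)<0$,
\[
\bbP(S\le (1-\epsilon)\mu)\le \exp\left(-\mu\left[(1-\epsilon)\ln(1-\epsilon)+\epsilon\right]\right).
\]
Here $g(\epsilon)=(1-\epsilon)\ln(1-\epsilon)+\epsilon-\epsilon^2/2$ satisfies $g(0)=g'(0)=0$ and $g''(\epsilon)=1/(1-\epsilon)-1\ge0$. Hence the exponent is at least $\epsilon^2/2\ge\epsilon^2/3$.

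Finally, the union bound over the two tails gives $\bbP(|S-\mu|>\epsilon\mu)\le 2\exp(-\mu\epsilon^2/3)$. The case $\mu=0$ is trivial, since then $S=0$ almost surely.

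The only delicate points are the interchange of product and expectation for infinitely many factors, handled by monotone or dominated convergence, and the calculus inequality for the upper tail. I expect the latter to be the main, if minor, obstacle.
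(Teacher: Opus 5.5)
Your proof is correct, but it takes a different route from the paper. The paper never computes the moment generating function of the infinite sum. It applies the finite multiplicative Chernoff bound (Corollary 4.6 of Mitzenmacher--Upfal) to the partial sums $S_n$, using $\mu_n\le\mu$ to get $\bbP(|S_n-\mu_n|\ge\epsilon\mu)\le 2\exp(-\mu_n\epsilon^2/3)$. It then transfers this bound to $S$ via $S_n\to S$ a.s.\ and Fatou's lemma applied to indicators of events.

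You instead take the limit once, inside the expectation: monotone convergence for $\theta\ge0$ and dominated convergence for $\theta<0$. This gives the Poisson domination $\bbE e^{\theta S}\le \exp(\mu(e^\theta-1))$, from which you rederive both tails, including the calculus inequalities, all of which check out.

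The paper's version is shorter given the citation. Yours is self-contained and gives the slightly stronger lower-tail exponent $\epsilon^2/2$. It also bounds the non-strict event $\{|S-\mu|\ge\epsilon\mu\}$ directly, and it sidesteps the event-approximation step, which is the delicate point in the paper's argument. There, the inclusion $\mathbf 1_A\le\liminf_n\mathbf 1_{A_n}$ holds only for the strict event $\{|S-\mu|>\epsilon\mu\}$. With $\ge$, it fails on $\{S=(1-\epsilon)\mu\}$, which can have positive probability: when infinitely many $p_k>0$, one has $\mu_n-S_n<\epsilon\mu$ for all large $n$ on that event. Fortunately, the strict event is what the statement asks for.

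As you note, your argument is exactly the Poisson-domination argument the paper itself uses for $T_\lambda$.
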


\begin{proof}
For $n>0$, let
\[
S_n=\sum_{k=1}^n X_k
\qquad\text{and}\qquad
\mu_n=\bbE(S_n)=\sum_{k=1}^n p_k.
\]
The sequence $(\mu_n)$ converges to $\mu$.

Since
\[
\sum_{k=1}^\infty \bbP(X_k=1)=\mu <\infty,
\]
the first Borel--Cantelli lemma implies that the events $\{X_k=1\}$ occur only finitely often almost surely. Consequently, $S$ is almost surely finite, and the partial sums $S_n$ converge almost surely to $S$.

Fix $\epsilon\in(0,1)$ and consider the events
\begin{align*}
    A_n&=\{|S_n-\mu_n|\geq \epsilon\mu\},\\
    A&=\{|S-\mu|\geq \epsilon\mu\}.
\end{align*}

For any sample path such that $S_n(\omega)\to S(\omega)$, if $|S-\mu|\geq \epsilon\mu$, then for all sufficiently large $n$,
\[
|S_n-\mu_n|\geq \epsilon\mu
\]
also holds, since $\mu_n\to\mu$. Therefore, the indicator functions satisfy
\[
\mathbf{1}_{A}\leq \liminf_{n\to\infty} \mathbf{1}_{A_n}
\qquad \text{almost surely}.
\]

Taking expectations on both sides and applying Fatou's lemma yields
\[
\bbP(A)
=\bbE[\mathbf{1}_{A}]
\leq \bbE\!\left[\liminf_{n\to\infty} \mathbf{1}_{A_n}\right]
\leq \liminf_{n\to\infty} \bbE[\mathbf{1}_{A_n}]
=\liminf_{n\to\infty}\bbP(A_n).
\]

By Corollary 4.6 of \citet{Mitzenmacher2005}, for every $n>0$,
\[
\bbP(A_n)\leq 2\exp(-\mu_n\epsilon^2/3).
\]
Consequently,
\[
\liminf_{n\to\infty}\bbP(A_n)
\leq
\liminf_{n\to\infty}2\exp(-\mu_n\epsilon^2/3)
=
\lim_{n\to\infty}2\exp(-\mu_n\epsilon^2/3)
=
2\exp(-\mu\epsilon^2/3).
\]
Combining the previous inequalities completes the proof.
\end{proof}

\begin{theorem}[\citealt{Tropp2011}, Theorem 1.1; \citealt{Freedman1975}, Theorem 1.6]
\label{thm:freedman}
Consider a real-valued martingale $(Y_k)_{k\geq0}$ with difference sequence $(X_k)_{k\geq0}$, and assume that the difference sequence is uniformly bounded:
\[
X_k \leq R
\qquad \text{almost surely for all } k\geq 0.
\]

Define the predictable quadratic variation process by
\[
W_k:=\sum_{j=1}^k \mathbb{E}\!\left[X_j^2\,\middle|\,X_{0:j-1}\right],
\qquad k\geq 0.
\]

Then, for all $t\geq 0$ and $\sigma^2>0$,
\[
\mathbb{P}\left\{
\exists k\geq 0:
Y_k\geq t
\text{ and }
W_k\leq \sigma^2
\right\}
\leq
\exp\left(
-\frac{t^2/2}{\sigma^2+Rt/3}
\right).
\]
\end{theorem}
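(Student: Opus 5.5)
The plan is the standard exponential supermartingale argument. Here $(\mathcal F_k)$ is the filtration generated by $X_{0:k}$. I assume $Y_0=0$, i.e.\ the martingale is centred, which is how it is used in \cref{lem:xvsex}; otherwise one applies the result to $Y_k-Y_0$. I will use that $\bbE[X_k\mid\mathcal F_{k-1}]=0$ and that $W_k$ is predictable.

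\textbf{Step 1 (one-step exponential bound).} Fix $\theta>0$ and set $g(\theta):=(e^{\theta R}-1-\theta R)/R^2$.
\begin{itemize}
\item The function $x\mapsto (e^x-1-x)/x^2$, extended by $1/2$ at $0$, is nondecreasing on $\Real$.
\item Since $X_k\le R$, this gives $e^{\theta X_k}\le 1+\theta X_k+g(\theta)X_k^2$.
\item Taking $\bbE[\,\cdot\mid\mathcal F_{k-1}]$, the linear term vanishes, and $1+y\le e^y$ yields
\[
\bbE\bigl[e^{\theta X_k}\mid\mathcal F_{k-1}\bigr]\le \exp\bigl(g(\theta)\,\bbE[X_k^2\mid\mathcal F_{k-1}]\bigr).
\]
\end{itemize}
Because $W_k-W_{k-1}=\bbE[X_k^2\mid\mathcal F_{k-1}]$ is $\mathcal F_{k-1}$-measurable, it follows that $M_k:=\exp(\theta Y_k-g(\theta)W_k)$ is a nonnegative supermartingale with $M_0\le 1$.

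\textbf{Step 2 (stopping).} Let $\tau:=\inf\{k: Y_k\ge t,\ W_k\le\sigma^2\}$.
\begin{itemize}
\item On $\{\tau<\infty\}$ we have $M_\tau\ge \exp(\theta t-g(\theta)\sigma^2)$.
\item Optional stopping for the bounded time $\tau\wedge n$ gives $\bbE[M_{\tau\wedge n}]\le 1$.
\item Fatou's lemma (or Markov's inequality and letting $n\to\infty$) then gives
\[
\bbP(\tau<\infty)\le \exp\bigl(-\theta t+g(\theta)\sigma^2\bigr).
\]
\end{itemize}
The event $\{\tau<\infty\}$ is exactly the event in the statement, so this bound holds for every $\theta>0$.

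\textbf{Step 3 (optimisation).} Choose $\theta=R^{-1}\log(1+Rt/\sigma^2)$. This gives the Bennett form
\[
\exp\bigl(-(\sigma^2/R^2)\,h(Rt/\sigma^2)\bigr),\qquad h(u)=(1+u)\log(1+u)-u.
\]
It remains to show the elementary inequality $h(u)\ge u^2/(2(1+u/3))$ for $u\ge0$. Both sides vanish with their first derivatives at $0$, so it suffices to compare second derivatives. These are $1/(1+u)$ on the left and $27/(3+u)^3$ on the right, and $(3+u)^3\ge 27(1+u)$ because expanding reduces it to $9u^2+u^3\ge 0$. Substituting $u=Rt/\sigma^2$ gives exactly $\exp\bigl(-\tfrac{t^2/2}{\sigma^2+Rt/3}\bigr)$.

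The main obstacle is the pair of elementary inequalities: monotonicity of $(e^x-1-x)/x^2$ in Step 1, and the bound on $h$ in Step 3. The monotonicity can be checked by writing $(e^x-1-x)/x^2=\int_0^1(1-s)e^{sx}\,\mathrm ds$, which is visibly nondecreasing in $x$.
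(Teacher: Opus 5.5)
Your proof is correct, and it is the standard argument: the one-step bound with $g(\theta)$, the exponential supermartingale, stopping at the first time the event occurs, the Bennett form, and then $h(u)\ge u^2/(2(1+u/3))$. The paper gives no proof of its own and simply imports the result from \citet{Freedman1975} and \citet{Tropp2011}; your argument is the scalar case of Tropp's proof, where Lieb's theorem is not needed.

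Your assumption $Y_0=0$ is necessary, not just convenient. Without it the statement fails already at $k=0$: take $Y_0\equiv t>0$, so the event holds with probability one while the bound is strictly less than one. The assumption does hold in the paper's application to $Y_k=Z_k-\mu_{\mathcal W}(t,m)$.
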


\end{document}